\documentclass{article}
\usepackage{amsmath,amssymb,amsthm}
\usepackage{graphicx,subfigure,float,url,color}
\usepackage{enumitem}
\usepackage{mathrsfs}
\usepackage[colorlinks=true]{hyperref}
\usepackage{pdfsync}

\def\R{\mathrm{I\kern-0.21emR}}
\def\N{\mathrm{I\kern-0.21emN}}

\newcommand{\C} {\mathbb{C}}

\newcommand{\diag}{\operatorname{diag}}
\newcommand{\rank}{\operatorname{rank}}

\newcommand{\Rea}{\operatorname{Re}}

\newcommand{\cX}{\mathcal{X}}
\newcommand{\cU}{\mathcal{U}}
\newcommand{\cE}{\mathcal{E}}
\newcommand{\cB}{\mathcal{B}}

\newcommand{\cT}{\mathcal{T}}

\renewcommand{\geq}{\geqslant}
\renewcommand{\leq}{\leqslant}

\newtheorem{theorem}{Theorem}[section]
\newtheorem{proposition}[theorem]{Proposition}
\newtheorem{corollary}[theorem]{Corollary}
\newtheorem{lemma}[theorem]{Lemma}
\theoremstyle{definition}
\newtheorem{definition}[theorem]{Definition}
\newtheorem{remark}[theorem]{Remark}

\title{A flatness proof of the exponential turnpike phenomenon for linear-quadratic optimal control problems}

\author{
Michel Fliess\thanks{Sorbonne Universit\'e, Universit\'e Paris Cit\'e, CNRS, Inria, Laboratoire Jacques-Louis Lions, F-75005 Paris, France.}
\and
Claude Lobry
\and
Emmanuel Tr\'elat\thanks{Sorbonne Universit\'e, Universit\'e Paris Cit\'e, CNRS, Inria, Laboratoire Jacques-Louis Lions, F-75005 Paris, France.}
}

\date{}

\begin{document}

\maketitle

\begin{abstract}
We revisit finite-dimensional linear-quadratic optimal control from the viewpoint of differential flatness. If the pair $(A,B)$ is controllable, then the linear control system is flat, and every trajectory can be parametrized by a flat output and finitely many of its derivatives. Once this parametrization is inserted into the quadratic functional, the Euler-Lagrange condition becomes a linear differential equation with constant coefficients, or more generally a polynomial matrix differential equation. After reduction to Smith normal form, this equation decouples into scalar constant-coefficient equations, and its solutions are exponential-polynomials. This yields a viewpoint on the turnpike phenomenon that is quite different from the classical Hamiltonian-Riccati analysis: the turnpike mechanism appears directly from the stable-unstable splitting of the reduced flat equation. In particular, when the reduced Euler-Lagrange operator has no purely imaginary characteristic roots and when the endpoint constraints act nondegenerately on the stable and unstable modes, the optimal trajectory consists of a left-boundary layer, a right-boundary layer, and a long interior arc exponentially close to the static optimum. The same viewpoint also clarifies what changes when some weights are only semidefinite: the order of the reduced equation may drop, some endpoint conditions may become incompatible, and polynomial or oscillatory modes may destroy the exponential turnpike. It also gives a natural meaning to certain endpoint constraints on the control and on finitely many derivatives of the control: such traces are not defined on the ambient $L^2$ control space, but they are meaningful on the smooth extremals selected by the reduced Euler-Lagrange equation. We formulate this principle as a general theorem and illustrate it in detail on the double integrator.
\end{abstract}

\section{Introduction}
\subsection{Main objective}
The exponential turnpike phenomenon is a central structural property of long-horizon optimal control problems. Roughly speaking, it states that for large final time $T$, an optimal trajectory spends most of its time exponentially close to the optimal solution of the associated static optimization problem, except for two transient layers near $t = 0$ and $t = T$. In the finite-dimensional linear-quadratic setting, the classical proof relies on the Pontryagin maximum principle, the Hamiltonian matrix, and Riccati theory (see in particular \cite{TrelatZuazua2015,TrelatZhangZuazua2018,GruneGuglielmi2021}).

The purpose of the present article is to present another approach, of a rather different nature, based on flatness. The starting observation is very simple. Assume that the pair $(A,B)$ is controllable. Then the linear control system
\begin{equation}
\label{probintro}
\dot x(t) = A x(t) + B u(t)
\end{equation}
is flat; equivalently, its system module is free over the polynomial ring $\R[D]$, where $D = \frac{\mathrm{d}}{\mathrm{d}t}$ (see \cite{Fliess1990,FliessLevineMartinRouchon1995,JoinDelaleauFliess2025}). Hence, one may choose a flat output $y$ and write
\begin{equation*}
x = \cX(D) y, \qquad u = \cU(D) y,
\end{equation*}
with polynomial matrix differential operators $\cX(D)$ and $\cU(D)$. For a linear-quadratic functional, this transforms the optimal control problem into a higher-order variational problem in the flat output. The corresponding Euler-Lagrange equation is a constant-coefficient differential equation,
\begin{equation}
\label{eulerintro}
\cE(D) y = 0,
\end{equation}
where $\cE(D)$ is a self-adjoint polynomial matrix. Since $\R[D]$ is a principal ideal domain, one may reduce $\cE(D)$ to Smith normal form; that is, diagonalize it by left and right multiplication by unimodular polynomial matrices (see, e.g., \cite{Kailath1980,GohbergLancasterRodman1982}). The equation \eqref{eulerintro} is then reduced to scalar constant-coefficient equations. Their solutions are finite sums of exponential-polynomials. This is precisely the point at which the turnpike mechanism becomes transparent.

Indeed, if the characteristic roots of the reduced equation all have a nonzero real part, then every solution splits into a stable part, exponentially small away from $t = 0$, and an unstable part, exponentially small away from $t = T$. If, in addition, the two-point boundary conditions constrain the stable directions at the left end and the unstable directions at the right end in a nondegenerate way, then one obtains an exponential turnpike estimate. Conversely, if purely imaginary roots are present, then oscillatory or polynomial modes survive, and the exponential turnpike may fail. In semidefinite linear-quadratic problems, this is exactly what happens: some weights vanish, the reduced equation may lose order, and the admissible endpoint conditions have to be adjusted to the effective order of the flat equation.

This flat reduction provides a genuinely different viewpoint on the turnpike phenomenon. Instead of starting from the Hamiltonian matrix or from Riccati theory as in \cite{TrelatZuazua2015}, one reads the long-time behavior directly on the characteristic roots of the reduced Euler-Lagrange operator. In the regular case, this recovers, in flat coordinates, the same hyperbolic mechanism as in the classical theory. In semidefinite cases, however, the flat equation makes visible phenomena that are less transparent in the Hamiltonian formulation: order drop, loss of admissible boundary conditions, and the possible appearance of polynomial or oscillatory modes. In that sense, the flatness viewpoint does not merely rephrase the classical proof: it isolates the structural mechanism behind exponential turnpikes and extends naturally to regimes in which the control weight is no longer coercive.

The main objective of this article is to make the above idea precise. Our main theorem isolates two separate ingredients:
\begin{itemize}
\item A \emph{hyperbolicity condition} on the reduced Euler-Lagrange operator, namely the absence of purely imaginary characteristic roots.
\item An \emph{admissibility condition} on the endpoint constraints, meaning that the corresponding two-point boundary value problem constrains the stable modes at the left end and the unstable modes at the right end in a nondegenerate way.
\end{itemize}
Under these two assumptions, the exponential turnpike follows from a direct argument. In the regular case $R>0$ (where $R$ denotes the control-weight matrix in the running cost introduced later in \eqref{mainprob}), this gives an alternative reading of the classical Hamiltonian proof. In semidefinite cases, it clarifies what remains true and what breaks down. To the best of our knowledge, the deterministic finite-dimensional exponential turnpike literature for linear-quadratic problems has mainly focused on regular situations in which the control weight is coercive or, equivalently, on settings in which the optimality system is treated through the classical Hamiltonian hyperbolicity framework (see, e.g., \cite{TrelatZuazua2015,TrelatZhangZuazua2018,GruneGuglielmi2021,GuglielmiLi2024,TrelatZuazua2025}). One of the points of the present article is to show that the flat reduction still makes sense when $R \geq 0$ is only semidefinite, and that it then reveals, in a very explicit way, the mechanisms of order drop and boundary incompatibility. A related point, emphasized later in Remark~\ref{controltracesremark}, is that the same flat reduction provides a natural framework for imposing endpoint constraints on $u$ and on finitely many derivatives of $u$. Although such traces do not exist on the whole $L^2$ admissible class, they become meaningful on the smooth extremals of the reduced Euler-Lagrange equation and are governed by the same admissibility mechanism.

\subsection{Bibliographical comments}
The flatness viewpoint used here lies at the intersection of two bodies of literature. The bibliography cited below is selective rather than exhaustive. It is organized to document the three main ingredients of the paper: flatness, turnpike theory, and singular or relaxed variants.

\paragraph{Turnpike theory.}
The modern control-theoretic proof of exponential turnpikes in finite dimension is due to \cite{TrelatZuazua2015}, see also the Hilbert-space extension \cite{TrelatZhangZuazua2018}, the necessary and sufficient conditions for continuous-time linear-quadratic problems in \cite{GruneGuglielmi2021}, the necessary conditions in generalized linear-quadratic settings obtained in \cite{GuglielmiLi2024}, the value-function and Hamilton-Jacobi viewpoint developed in \cite{EsteveKouhkouhPighinZuazua2022}, the two-term asymptotic expansion obtained in \cite{AskovicTrelatZidani2024}, and the recent survey \cite{TrelatZuazua2025}. The present article does not compete with the Hamiltonian approach. Rather, it identifies a very transparent and sufficient mechanism behind it: once the problem is written in flat variables, the turnpike follows from the stable-unstable splitting of a constant-coefficient higher-order equation.

\paragraph{Flatness and module theory.}
For linear systems, the equivalence between controllability and flatness, or equivalently the freeness of the system module, goes back to the algebraic viewpoint developed in \cite{Fliess1990,FliessLevineMartinRouchon1995}. Brunovsk\'y's canonical form \cite{Brunovsky1970} is today the most popular for viewing the flat parametrization, while the monograph \cite{Levine2009} provides a systematic presentation of flatness-based control. The recent papers \cite{JoinDelaleauFliess2024,JoinDelaleauFliess2025} emphasize the aspect used here: after flat reduction, the Euler-Lagrange equation of a linear-quadratic problem becomes a constant-coefficient differential equation, which is especially convenient for open-loop constructions and boundary-value problems \cite{batna,JoinDelaleauFliess2025}.

\paragraph{Semidefinite and singular regimes.}
The flat reduction is particularly useful when some weights vanish. Then the reduced equation may lose order, which immediately reveals why some endpoint conditions become incompatible. This connects the present work with cheap control and singular control (see \cite{KwakernaakSivan1972,OReilly1983,Goh1966,SaberiSannuti1987,KokotovicKhalilOReilly1986}). If one enlarges the admissible class to measures or relaxed controls, one recovers the possibility of boundary impulses; standard references are \cite{Vinter2000,VinterLewis1978}. The double-integrator example of Section~\ref{double} should be read in this light.

\paragraph{Linear rather than exponential turnpikes.}
When a zero root appears in the reduced Euler-Lagrange operator, polynomial modes enter the picture. In such cases, the correct asymptotic behavior is often a linear turnpike rather than an exponential one. This is the situation analyzed in \cite{Trelat2023}. The flatness viewpoint makes this mechanism immediately visible at the level of the characteristic polynomial.

\paragraph{Operational calculus and exponential-polynomials.}
The decomposition of solutions into exponential-polynomials can be read either from ordinary differential equations with constant coefficients or from operational calculus, as in \cite{Schwartz1965,Yosida1984}. In this paper, we will keep the proof at the level of ordinary differential equations and polynomial matrices, but the operational viewpoint is perfectly consistent with the present argument and was one of its motivations.

\subsection{Structure of the article}
The article is organized as follows. Section~\ref{double} studies in detail the double integrator, which already contains all the relevant mechanisms. Section~\ref{flatsec} formulates the flat reduction for general controllable linear systems and identifies the reduced Euler-Lagrange operator. Section~\ref{hypsec} proves an abstract hyperbolic estimate for linear two-point problems. Section~\ref{mainsec} combines these ingredients into a flatness-based exponential turnpike theorem. The final section contains the conclusion, perspectives, and bibliographical comments.

\section{A motivating example: the double integrator}
\label{double}

Consider the double integrator
\begin{equation}
\label{double-syst}
\dot x_1(t) = x_2(t), \qquad \dot x_2(t) = u(t),
\end{equation}
with the quadratic functional
\begin{equation}
\label{double-cost-raw}
J_T(x,u) = \frac{1}{2} \int_0^T \Big( q_1 \big( x_1(t) - \alpha_1 \big)^2 + q_2 \big( x_2(t) - \alpha_2 \big)^2 + r \big( u(t) - \beta \big)^2 \Big) \, \mathrm{d}t,
\end{equation}
where $q_1,q_2,r \geq 0$ and $T > 0$. The associated static optimization problem is
\begin{equation*}
\min \left\{ \frac{1}{2} \left( q_1 ( x_1 - \alpha_1 )^2 + q_2 ( x_2 - \alpha_2 )^2 + r ( u - \beta )^2 \right) \ \mid\ x_2 = 0,\ u = 0 \right\}.
\end{equation*}
Its unique minimizer is $(\bar x_1,\bar x_2,\bar u) = (\alpha_1,0,0)$ whenever $q_1 > 0$. Thus the turnpike candidate is the equilibrium $(\alpha_1,0,0)$.

The flat output is $y = x_1 - \alpha_1$. Then
\begin{equation}
\label{double-flat}
x_1 = \alpha_1 + y, \qquad x_2 = \dot y, \qquad
u = \ddot y.
\end{equation}
Substituting \eqref{double-flat} into \eqref{double-cost-raw} yields the higher-order Lagrangian
\begin{equation}
\label{double-flat-cost}
J_T(y) = \frac{1}{2} \int_0^T \left( q_1 y(t)^2 + q_2 ( \dot y(t) - \alpha_2 )^2 + r ( \ddot y(t) - \beta )^2 \right)  \mathrm{d}t.
\end{equation}
The corresponding stationarity condition is the classical higher-order Euler-Lagrange equation (see, e.g., \cite{Bliss1946,CourantHilbert1953,GelfandFomin1963,Hestenes1966}).
Hence every stationary point satisfies the Euler-Lagrange equation
\begin{equation*}
r y^{(4)}(t) - q_2 \ddot y(t) + q_1 y(t) = 0.
\end{equation*}
Observe that the constants $\alpha_2$ and $\beta$ disappear from the interior equation: they only contribute to the boundary terms of the variational formula. This elementary fact already illustrates a general principle used throughout the paper.

\subsection{The regular case}
Assume that the full state is prescribed at both ends,
\begin{equation}
\label{double-fullbc}
y(0) = y_0, \qquad \dot y(0) = v_0, \qquad y(T) = y_T, \qquad \dot y(T) = v_T.
\end{equation}
This corresponds to prescribing $(x_1(0),x_2(0),x_1(T),x_2(T))$.

\begin{proposition}
\label{doubleprop1}
Assume that $q_1 > 0$ and $r > 0$, with $q_2 \geq 0$. Then the characteristic polynomial
\begin{equation*}
p(\lambda) = r \lambda^4 - q_2 \lambda^2 + q_1
\end{equation*}
has no purely imaginary root. For all endpoint data \eqref{double-fullbc} and every $T > 0$, the variational problem \eqref{double-flat-cost} has a unique minimizer. Moreover there exist $T_0 > 0$ and $C,\mu > 0$, independent of $T$ and of $t \in [0,T]$, such that for every $T \geq T_0$
\begin{equation}
\label{double-est-regular}
\vert y(t) \vert + \vert \dot y(t) \vert + \vert \ddot y(t) \vert \leq C \left( e^{-\mu t} + e^{-\mu (T - t)} \right) \qquad\forall t\in[0,T].
\end{equation}
Consequently the optimal trajectory of \eqref{double-syst}-\eqref{double-cost-raw} satisfies the exponential turnpike estimate around $(\alpha_1,0,0)$.
\end{proposition}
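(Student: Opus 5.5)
Three things have to be shown, in this order: $p$ has no purely imaginary root, the variational problem has a unique minimizer, and the estimate \eqref{double-est-regular} holds uniformly in $T$.

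\textbf{Roots of $p$.} Setting $\lambda = i\omega$ with $\omega\in\R$ gives
\[
p(i\omega) = r\omega^4 + q_2\omega^2 + q_1 \geq q_1 > 0,
\]
so there is no purely imaginary root. Since $p$ is even and real, its roots come in quadruples $\pm\lambda,\pm\bar\lambda$. Hence there are exactly two roots $\lambda_1,\lambda_2$ with real part $\leq -\mu_0<0$ and two roots $-\lambda_1,-\lambda_2$ with real part $\geq \mu_0$. If $\lambda_1=\lambda_2$, which happens when $q_2^2=4rq_1$, a term $te^{\lambda_1 t}$ appears. We then replace $\mu_0$ by any $\mu<\mu_0$, and all subsequent estimates are unaffected.

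\textbf{Existence and uniqueness.} We work on the affine space $\mathcal{A}$ of functions $y\in H^2(0,T)$ satisfying \eqref{double-fullbc}. This space is nonempty: take a cubic interpolant. The functional $J_T$ is continuous and convex on $\mathcal{A}$. It is strictly convex and coercive because its quadratic part is at least $\tfrac12\int_0^T (q_1 h^2 + r\ddot h^2)$, and on $H^2$ this controls the full $H^2$ norm; the $\dot h$ term is handled by interpolation. The direct method then gives a unique minimizer. Standard regularity for higher-order Euler-Lagrange equations (or simply integrating the weak equation) shows that the minimizer is smooth and solves $r y^{(4)} - q_2\ddot y + q_1 y = 0$ with the four boundary conditions. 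Uniqueness of the minimizer also follows directly: the boundary value problem has only the trivial solution for zero data. Indeed, multiplying the equation by $y$ and integrating by parts twice, all boundary terms vanish because $y=\dot y=0$ at both ends, which yields $\int (r\ddot y^2+q_2\dot y^2+q_1y^2)=0$.

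\textbf{Turnpike estimate (the main obstacle).} We write
\[
y(t) = a_1 e^{\lambda_1 t} + a_2 e^{\lambda_2 t} + b_1 e^{-\lambda_1 (T-t)} + b_2 e^{-\lambda_2 (T-t)},
\]
so that every mode is bounded by $e^{-\mu t}$ or by $e^{-\mu(T-t)}$. The four boundary conditions form a $4\times4$ linear system $M(T)(a,b)^\top = (y_0,v_0,y_T,v_T)^\top$ with
\[
M(T) = \begin{pmatrix} V & E(T) \\ E'(T) & V' \end{pmatrix}.
\]
Here $V=\begin{pmatrix}1&1\\ \lambda_1&\lambda_2\end{pmatrix}$ is a Vandermonde block, invertible when $\lambda_1\neq\lambda_2$ (in the repeated case the analogous confluent block is invertible). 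The block $V'$ is the corresponding block for the roots $-\lambda_j$, also invertible. The off-diagonal blocks $E(T)$ and $E'(T)$ are $O(e^{-\mu T})$.

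This is the admissibility condition: the left conditions nondegenerately determine the stable modes, and the right conditions determine the unstable ones. For $T\geq T_0$, a Neumann-series argument gives $\|M(T)^{-1}\|\leq C$ uniformly in $T$, hence $|a|+|b|\leq C(|y_0|+|v_0|+|y_T|+|v_T|)$. Differentiating the mode decomposition twice only introduces factors $\lambda_j$, which yields \eqref{double-est-regular}, with $C$ also depending on the data.

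\textbf{Conclusion.} Translating through \eqref{double-flat}, we have $x_1-\alpha_1=y$, $x_2=\dot y$ and $u=\ddot y$. Therefore $|x_1-\alpha_1|+|x_2|+|u|\leq C(e^{-\mu t}+e^{-\mu(T-t)})$, which is the exponential turnpike around the static optimum $(\alpha_1,0,0)$.
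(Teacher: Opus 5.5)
Your proof follows the paper's route: $p(i\omega)\geq q_1>0$, uniqueness by strict convexity, then the stable--unstable splitting with a perturbed boundary matrix, exactly as in Theorem~\ref{abstract-bvp}. You also make explicit two things the paper leaves implicit: the admissibility check (invertibility of the Vandermonde blocks) and existence of a minimizer for every $T>0$, via the direct method.

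There is, however, a sign slip in your mode decomposition. The unstable modes must be $e^{\lambda_j(T-t)}$, i.e.\ the root $-\lambda_j$ normalized at $t=T$, not $e^{-\lambda_j(T-t)}$. Indeed $e^{-\lambda_j(T-t)}=e^{-\lambda_jT}e^{\lambda_jt}$ is merely a rescaled stable mode. Taken literally, your four functions span only a two-dimensional space, $M(T)$ is singular, and $E(T)$ has entries of size $e^{|\Rea\lambda_j|T}$ rather than exponentially small ones.

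With the corrected modes you get $V'=\begin{pmatrix}1&1\\-\lambda_1&-\lambda_2\end{pmatrix}$. The off-diagonal entries become $e^{\lambda_jT}$ and $\pm\lambda_je^{\lambda_jT}$, which are $O(e^{-\mu T})$. The rest of your argument then goes through unchanged.
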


\begin{proof}
If $\lambda = i \omega$ with $\omega \in \R$, then $p(i \omega) = r \omega^4 + q_2 \omega^2 + q_1 > 0$, thus there is no purely imaginary root. Every root therefore has nonzero real part. Since \eqref{double-flat-cost} is strictly convex on the affine space determined by \eqref{double-fullbc}, the minimizer is unique for every $T > 0$. Rewriting the stationary solution in the stable-unstable form described later in Theorem~\ref{abstract-bvp} yields \eqref{double-est-regular} for large $T$. The conclusion for $(x_1,x_2,u)$ follows from \eqref{double-flat}.
\end{proof}

\begin{remark}
\label{doubleclassical}
Proposition~\ref{doubleprop1} lies within the scope of the classical exponential turnpike theory for regular linear-quadratic problems (see \cite{TrelatZuazua2015}). In that case the flat proof given here should be viewed as an alternative reading of the usual Hamiltonian-Riccati argument: both rely on the same hyperbolic splitting, but the flat reduction makes it visible at the level of a scalar constant-coefficient equation.
\end{remark}

\subsection{Cheap control and order drop}
Assume now that $q_1 > 0$, $q_2 > 0$ and $r = 0$. This regime is classically called \emph{cheap control} (see \cite{KwakernaakSivan1972,OReilly1983,SaberiSannuti1987,KokotovicKhalilOReilly1986}). Indeed, the control variable is no longer penalized in the running cost, or equivalently, one may think of it as the singular limit of problems in which the control term is multiplied by a small parameter $\rho > 0$ and $\rho \to 0$. In that limit the control becomes ``cheap'' compared with the state cost, and one expects fast boundary corrections, high-gain effects, or even impulsive limits. Then the reduced equation becomes
\begin{equation}
\label{double-cheap-ode}
q_2 \ddot y(t) - q_1 y(t) = 0.
\end{equation}
Set $\omega = \sqrt{\frac{q_1}{q_2}}$. The general solution is
\begin{equation}
\label{double-cheap-sol}
y(t) = c_s e^{-\omega t} + c_u e^{-\omega (T - t)}.
\end{equation}
The order has dropped from $4$ to $2$. This is the decisive phenomenon: prescribing \eqref{double-fullbc} now gives four scalar conditions for a second-order equation and is therefore generically impossible.

\begin{remark}
\label{doublecompat}
The classical problem with prescribed full initial and final states is overdetermined when $r = 0$. A classical solution exists only if the four endpoint data satisfy two compatibility relations. Equivalently, if $y(0)$ and $\dot y(0)$ are prescribed, then necessarily
$$
\begin{pmatrix}
y(T) \\
\dot y(T)
\end{pmatrix}
=
\begin{pmatrix}
\cosh (\omega T) & \omega^{-1} \sinh (\omega T) \\
\omega \sinh (\omega T) & \cosh (\omega T)
\end{pmatrix}
\begin{pmatrix}
y(0) \\
\dot y(0)
\end{pmatrix}.
$$
This is why, in the cheap-control regime, the appropriate classical endpoint conditions no longer coincide with the full initial and final state constraints of the original system.
\end{remark}

The flat viewpoint shows immediately which two-point conditions are admissible.

\begin{proposition}
\label{doubleprop2}
Assume that $q_1 > 0$, $q_2 > 0$ and $r = 0$. Consider the two scalar boundary conditions
\begin{equation}
\label{double-mixedbc}
a_0 y(0) + b_0 \dot y(0) = c_0, \qquad a_T y(T) + b_T \dot y(T) = c_T.
\end{equation}
If
\begin{equation}
\label{double-admissibility}
a_0 - \omega b_0 \neq 0, \qquad a_T + \omega b_T \neq 0,
\end{equation}
then there exist $T_0 > 0$ and $C > 0$ such that, for every $T \geq T_0$, the problem \eqref{double-cheap-ode}-\eqref{double-mixedbc} has a unique solution and
\begin{equation}
\label{double-est-cheap}
\vert y(t) \vert + \vert \dot y(t) \vert \leq C \left( e^{-\omega t} + e^{-\omega (T - t)} \right)  \qquad \forall t \in [0,T].
\end{equation}
Thus the corresponding reduced optimal trajectory has an exponential turnpike.
\end{proposition}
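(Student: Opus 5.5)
We solve the boundary-value problem explicitly in the basis \eqref{double-cheap-sol}, which is the natural stable--unstable basis: $e^{-\omega t}$ is the stable mode and $e^{-\omega(T-t)}$ the unstable one, both bounded by $1$ on $[0,T]$. Every solution of \eqref{double-cheap-ode} is of this form. Differentiating gives
\begin{equation*}
\dot y(t) = -\omega c_s e^{-\omega t} + \omega c_u e^{-\omega(T-t)} .
\end{equation*}
Inserting this into \eqref{double-mixedbc} turns the problem into a $2\times 2$ linear system for $(c_s,c_u)$:
\begin{equation*}
\begin{pmatrix}
a_0-\omega b_0 & (a_0+\omega b_0)e^{-\omega T}\\
(a_T-\omega b_T)e^{-\omega T} & a_T+\omega b_T
\end{pmatrix}
\begin{pmatrix} c_s\\ c_u\end{pmatrix}
=
\begin{pmatrix} c_0\\ c_T\end{pmatrix}.
\end{equation*}

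The key step is to view this matrix as $M_T = M_\infty + O(e^{-\omega T})$. Here $M_\infty=\diag(a_0-\omega b_0,\,a_T+\omega b_T)$, which is invertible precisely by the admissibility condition \eqref{double-admissibility}. The off-diagonal entries are bounded by $(|a_0|+\omega|b_0|+|a_T|+\omega|b_T|)e^{-\omega T}$, uniformly in $T$.

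We then choose $T_0$ so that $\|M_\infty^{-1}(M_T-M_\infty)\|\leq 1/2$ for $T\geq T_0$. A Neumann series argument, or simply the explicit determinant
\begin{equation*}
\det M_T=(a_0-\omega b_0)(a_T+\omega b_T)-(a_0+\omega b_0)(a_T-\omega b_T)e^{-2\omega T},
\end{equation*}
which stays bounded away from zero, shows that $M_T$ is invertible with $\|M_T^{-1}\|\leq 2\|M_\infty^{-1}\|$. This gives existence and uniqueness of the solution for $T\geq T_0$, together with a bound $|c_s|+|c_u|\leq C_1(|c_0|+|c_T|)$ with $C_1$ independent of $T$.

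The estimate \eqref{double-est-cheap} is then immediate. Since
\begin{equation*}
|y(t)|+|\dot y(t)|\leq (1+\omega)\bigl(|c_s|e^{-\omega t}+|c_u|e^{-\omega(T-t)}\bigr),
\end{equation*}
we may take $C=(1+\omega)C_1(|c_0|+|c_T|)$. The constant depends on the data but not on $T$ or $t$, consistent with the statement, where $C$ is allowed to depend on the boundary data.

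To justify the words ``reduced optimal trajectory'', one should note that the reduced functional $\frac12\int_0^T (q_1y^2+q_2(\dot y-\alpha_2)^2)\,dt$ is strictly convex. Its stationary points therefore coincide with the solutions of \eqref{double-cheap-ode} under the boundary conditions, with natural boundary terms absorbed as in the general principle. The turnpike statement for $(x_1,x_2)$ then follows from \eqref{double-flat}. This is also a special case of Theorem~\ref{abstract-bvp}, so one could alternatively verify its hypotheses: hyperbolicity holds because the roots are $\pm\omega$, and nondegeneracy of the left condition on the stable mode and of the right condition on the unstable mode is exactly \eqref{double-admissibility}.

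The only delicate point is the uniform invertibility of $M_T$, meaning that $T_0$ and $C_1$ do not depend on $T$. The exponential smallness of the off-diagonal coupling handles this directly.
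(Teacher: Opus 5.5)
Your proposal is correct and follows the paper's proof essentially step for step. You substitute the general solution into the boundary conditions and obtain the same $2\times 2$ system, whose matrix differs from the invertible diagonal matrix $\diag(a_0-\omega b_0,\,a_T+\omega b_T)$ by terms of order $e^{-\omega T}$; you then deduce that $(c_s,c_u)$ is bounded uniformly in $T$ and read the estimate off the explicit solution. Your Neumann-series bound on $\|M_T^{-1}\|$ and your explicit dependence of $C$ on $(c_0,c_T)$ only make quantitative what the paper states as ``uniformly bounded when $(c_0,c_T)$ ranges in a bounded set''.
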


\begin{proof}
Substituting \eqref{double-cheap-sol} into \eqref{double-mixedbc} gives
$$
\begin{pmatrix}
a_0 - \omega b_0 & e^{-\omega T} (a_0 + \omega b_0) \\
e^{-\omega T} (a_T - \omega b_T) & a_T + \omega b_T
\end{pmatrix}
\begin{pmatrix}
c_s \\
c_u
\end{pmatrix}
=
\begin{pmatrix}
c_0 \\
c_T
\end{pmatrix}.
$$
The determinant equals
$$
(a_0 - \omega b_0) (a_T + \omega b_T) + \mathrm{O} \big( e^{-2 \omega T} \big),
$$
which is nonzero under \eqref{double-admissibility}. Hence $(c_s,c_u)$ is uniquely determined and remains uniformly bounded when $(c_0,c_T)$ ranges in a bounded set. Estimate \eqref{double-est-cheap} follows from \eqref{double-cheap-sol}.
\end{proof}

\begin{remark}
The original problem with full initial and final state constraints may nevertheless admit a solution in a relaxed class if controls are allowed to be Radon measures. Then the reduced variable $v = x_2$ follows the classical optimal trajectory of the first-order problem, while the missing endpoint conditions on $x_2$ are enforced by Dirac masses at $t = 0$ and $t = T$. This is the familiar cheap-control or impulsive-control mechanism (see, e.g., \cite{VinterLewis1978} and \cite[Chapter 8]{Vinter2000}).
\end{remark}

\subsection{When hyperbolicity is lost}
What matters in the flat reduction is the spectral condition on the reduced Euler-Lagrange operator, namely hyperbolicity. The following cases illustrate what happens when this condition, or more precisely the absence of purely imaginary roots, fails.

\begin{proposition}
\label{doubleprop3}
For the double integrator, the reduced characteristic polynomial is $p(\lambda) = r \lambda^4 - q_2 \lambda^2 + q_1$. The following alternatives occur:
\begin{enumerate}[label={\rm (\roman*)}]
\item If $q_1 > 0$ and $r > 0$, all characteristic roots have nonzero real part and the exponential turnpike holds for the standard four endpoint conditions.
\item If $q_1 > 0$, $q_2 > 0$ and $r = 0$, the reduced equation remains hyperbolic, but its order drops to $2$. Only two scalar boundary conditions are admissible in the classical setting.
\item If $q_1 > 0$, $q_2 = 0$ and $r = 0$, the reduced equation is simply $q_1 y = 0$. Hence $y = 0$ is the only classical trajectory of the centered problem. Nontrivial endpoint data are incompatible.
\item If $q_1 = 0$, $q_2 > 0$ and $r > 0$, then $p(\lambda) = \lambda^2 ( r \lambda^2 - q_2 )$.
The root $\lambda = 0$ has multiplicity $2$. Polynomial modes appear, the exponential turnpike fails in general, and one enters the linear turnpike regime of \cite{Trelat2023}.
\item If $q_1 = 0$, $q_2 > 0$ and $r = 0$, then $\ddot y = 0$. Only affine trajectories remain, hence there is no exponential turnpike.
\item If $q_1 = 0$, $q_2 = 0$ and $r > 0$, then $y^{(4)} = 0$. The optimal trajectory is polynomial and there is again no exponential turnpike.
\end{enumerate}
\end{proposition}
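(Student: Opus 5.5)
The plan is a case-by-case inspection of the reduced characteristic polynomial $p(\lambda) = r\lambda^4 - q_2\lambda^2 + q_1$, combined with Propositions~\ref{doubleprop1} and~\ref{doubleprop2} wherever hyperbolicity holds. For each item we first compute the effective order of the Euler-Lagrange equation $r y^{(4)} - q_2 \ddot y + q_1 y = 0$, i.e.\ the degree of $p$. We then locate its roots on the imaginary axis and read off the form of the general solution (exponential-polynomials). Throughout we use the identity $p(i\omega) = r\omega^4 + q_2\omega^2 + q_1$, already exploited in the proof of Proposition~\ref{doubleprop1}.

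\textbf{Hyperbolic items.} For (i), we simply invoke Proposition~\ref{doubleprop1}: $q_1>0$ gives $p(i\omega)\geq q_1>0$, so no root is purely imaginary, and the turnpike estimate \eqref{double-est-regular} holds under the four conditions \eqref{double-fullbc}. For (ii), with $r=0$ the equation reduces to \eqref{double-cheap-ode}. This is of order $2$ with roots $\pm\omega$, which are real and nonzero. The counting argument of Remark~\ref{doublecompat} shows that four data overdetermine a two-dimensional solution space, while Proposition~\ref{doubleprop2} gives well-posedness and the turnpike under two admissible conditions \eqref{double-mixedbc}. For (iii), the equation $q_1 y=0$ has degree $0$ and forces $y\equiv 0$. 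Hence $x_1\equiv\alpha_1$, $x_2\equiv 0$, $u\equiv 0$ is the only classical stationary trajectory, and any endpoint datum with $(y_0,v_0,y_T,v_T)\neq 0$ cannot be met.

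\textbf{Non-hyperbolic items.} For (iv), we factor $p(\lambda)=\lambda^2(r\lambda^2-q_2)$ to obtain the solution
\[
y(t) = a + bt + c_s e^{-\nu t} + c_u e^{-\nu(T-t)}, \qquad \nu=\sqrt{q_2/r}.
\]
To show that the exponential turnpike fails in general, we impose \eqref{double-fullbc} with, for instance, $y_0=0$, $y_T=1$, $v_0=v_T=0$. Solving the $4\times 4$ system asymptotically, the exponential terms are negligible in the interior, so $a\approx 0$ and $b\approx 1/T$. Then $y(T/2)\approx 1/2$, which is not exponentially small, and $\dot y\approx 1/T$ in the interior, i.e.\ a linear rather than exponential turnpike, as in \cite{Trelat2023}. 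Items (v) and (vi) are immediate. The equations $\ddot y=0$ and $y^{(4)}=0$ have only polynomial solutions of degree $\leq 1$ and $\leq 3$ respectively. A nonconstant polynomial cannot satisfy a bound $C(e^{-\mu t}+e^{-\mu(T-t)})$ uniformly in $T$: evaluating at $t=T/2$ with generic endpoint data, e.g.\ $y(0)=0$, $y(T)=1$, gives $y(T/2)=1/2$ in case (v), and a similar explicit cubic works in case (vi).

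\textbf{Main obstacle.} The only step needing care is the ``fails in general'' claim in (iv). We must check that the $4\times4$ boundary matrix for the basis $\{1,t,e^{-\nu t},e^{-\nu(T-t)}\}$ is invertible for large $T$, and estimate the interior behaviour of the solution. We plan to rescale the affine part as $a + b'\,t/T$ and expand the determinant up to $O(e^{-\nu T})$ terms, exactly as in the proof of Proposition~\ref{doubleprop2}. The leading block should be nondegenerate, which yields $b'\to y_T-y_0$ and hence the non-exponential interior profile.
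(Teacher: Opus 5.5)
Your proposal is correct and takes the same route as the paper, which reads each case off the roots of $p(\lambda)$ and notes only that the zero root creates polynomial modes incompatible with an exponential estimate. Your explicit witnesses, such as $y(0)=\dot y(0)=\dot y(T)=0$, $y(T)=1$ in case (iv), giving $y(T/2)\to 1/2$ and $\dot y\approx 1/T$ in the interior, just make that remark quantitative. For the step you flag as the main obstacle, you can skip the determinant expansion: the $4\times 4$ system is invertible for every $T>0$, since a solution of $r h^{(4)}-q_2\ddot h=0$ with $h,\dot h$ vanishing at both ends satisfies $\int_0^T (q_2\dot h^2+r\ddot h^2)\,\mathrm{d}t=0$, hence $h\equiv 0$.
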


\begin{proof}
The proof is immediate from the roots of $p(\lambda)$. The only point worth emphasizing is that the presence of the root $\lambda = 0$ creates polynomial modes. Such modes may be small near one endpoint and still grow linearly or polynomially in the interior, which is incompatible with an exponential turnpike estimate.
\end{proof}

The conclusions of Propositions~\ref{doubleprop1}-\ref{doubleprop3} already contain the general message of the paper, namely:
\begin{itemize}
\item The interior dynamics of the optimal solution is encoded by the reduced Euler-Lagrange equation in the flat output.
\item The relevant spectral condition is the absence of purely imaginary characteristic roots.
\item When some weights vanish, the order of that equation may decrease. Then the admissible endpoint conditions have to be matched to the reduced order.
\end{itemize}
We now formulate this in full generality.

\section{Flat reduction of controllable linear-quadratic problems}
\label{flatsec}

\subsection{The flat parametrization}
Consider the centered linear-quadratic optimal control problem
\begin{equation}
\label{mainprob}
\left\{
\begin{aligned}
& \dot x(t) = A x(t) + B u(t), \\
& M_0 x(0) + M_1 x(T) = \gamma, \\
& \min J_T(x,u) = \frac{1}{2} \int_0^T \left( x(t)^\top Q x(t) + u(t)^\top R u(t) \right) \mathrm{d}t,
\end{aligned}
\right.
\end{equation}
where $x(t) \in \R^n$, $u(t) \in \R^m$, the matrices $Q \in \R^{n \times n}$ and $R \in \R^{m \times m}$ are symmetric nonnegative, and $M_0,M_1 \in \R^{k \times n}$ satisfy the full-row-rank condition%
\footnote{This condition simply means that the $k$ scalar boundary constraints are independent. Typical examples are: prescribed initial state, obtained by $k = n$, $M_0 = I_n$, $M_1 = 0$; prescribed final state, obtained by $k = n$, $M_0 = 0$, $M_1 = I_n$; prescribed initial and final states, obtained by $k = 2n$, $M_0 = \begin{pmatrix} I_n \\ 0 \end{pmatrix}$ and $M_1 = \begin{pmatrix} 0 \\ I_n \end{pmatrix}$. More generally, one may impose any independent linear combination of the initial and final states.}
\begin{equation*}
\rank \big( M_0 \ \ M_1 \big) = k.
\end{equation*}
The centered formulation is sufficient for turnpike estimates around an arbitrary static optimum $(\bar x,\bar u)$, because one simply replaces $(x,u)$ with $(x - \bar x,u - \bar u)$; affine terms in the original running cost only contribute lower-order or boundary terms and do not modify the reduced homogeneous operator.

Assume throughout this section that the pair $(A,B)$ is controllable.

\begin{proposition}
\label{flatprop}
If $(A,B)$ is controllable, then the system \eqref{probintro} is flat. More precisely, there exist polynomial matrices
$$
\cX(D) \in \R[D]^{n \times m}, \qquad \cU(D) \in \R[D]^{m \times m},
$$
and a flat output $y : [0,T] \to \R^m$ such that every trajectory of \eqref{probintro} can be written as
\begin{equation}
\label{flatparam}
x = \cX(D) y, \qquad u = \cU(D) y.
\end{equation}
In Brunovsk\'y coordinates, if $\nu_1,\ldots,\nu_m$ are the controllability indices, then $\sum_{i=1}^m \nu_i = n$, the state depends on the derivatives of $y_i$ up to order $\nu_i - 1$, and the control depends on the derivatives of $y_i$ up to order $\nu_i$.
\end{proposition}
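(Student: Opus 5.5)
The proof goes through the Brunovsk\'y canonical form. Since $(A,B)$ is controllable, there exist an invertible $P\in\R^{n\times n}$, an invertible $G\in\R^{m\times m}$ and a feedback matrix $F\in\R^{m\times n}$ such that, in the coordinates $z=Px$ with the new input $v=G^{-1}(u-Fx)$, the system becomes $m$ decoupled chains of integrators. Chain $i$ has length $\nu_i$:
\[
\dot z_{i,1}=z_{i,2},\ \ldots,\ \dot z_{i,\nu_i-1}=z_{i,\nu_i},\ \dot z_{i,\nu_i}=v_i .
\]
Here $\nu_1,\ldots,\nu_m$ are the controllability (Kronecker) indices and $\sum_i\nu_i=n$. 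This is the classical result of \cite{Brunovsky1970}, which we quote. If one wants to construct it, the idea is to select the columns $A^{j}b_i$, $j<\nu_i$, from the Kalman matrix. These columns form a basis of $\R^n$ precisely because $\rank(B,AB,\ldots,A^{n-1}B)=n$. One then builds $P$ by the Luenberger procedure.

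With this normal form, the flat output is $y_i=z_{i,1}$, $i=1,\ldots,m$. Each chain gives $z_{i,j}=y_i^{(j-1)}$ for $1\le j\le\nu_i$ and $v_i=y_i^{(\nu_i)}$. So $z$ is $\mathcal Z(D)y$, where the row of $\mathcal Z(D)$ attached to $(i,j)$ is $D^{j-1}e_i^\top$. Likewise $v=\Delta(D)y$ with $\Delta(D)=\diag(D^{\nu_1},\ldots,D^{\nu_m})$.

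Undoing the transformations gives $x=P^{-1}\mathcal Z(D)y=:\cX(D)y$ and $u=Fx+Gv=\big(FP^{-1}\mathcal Z(D)+G\Delta(D)\big)y=:\cU(D)y$. Both are polynomial matrices of the stated sizes. The degree bounds hold column by column: in $\cX$, column $i$ has degree at most $\nu_i-1$, and in $\cU$, column $i$ has degree at most $\nu_i$. The reason is that $P^{-1}$, $F$ and $G$ are constant matrices, so they mix components without raising any derivative order.

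It remains to check that every trajectory arises this way, and that $y$ is a function of the system variables; this is what flatness requires. Given any trajectory $(x,u)$ of \eqref{probintro}, set $z=Px$ and $y_i=z_{i,1}$. The chain equations force $z=\mathcal Z(D)y$ and $v=\Delta(D)y$, so $(x,u)$ is recovered from \eqref{flatparam}. Conversely, take any $y$ of class $C^{\max\nu_i}$, or with $y_i^{(\nu_i-1)}$ absolutely continuous in the $L^2$ setting. Defining $(x,u)$ by \eqref{flatparam} then produces a trajectory, because the chain equations hold identically.

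Finally, $y=CPx$, where $C$ selects the first coordinate of each chain. So $y$ is a static linear function of $x$, and the correspondence between $y$ and $(x,u)$ is one-to-one. This is exactly freeness of the system module over $\R[D]$, with basis $y$.

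I expect the main obstacle to be the existence of the Brunovsk\'y form itself, specifically showing that the Kronecker index construction produces an invertible $P$ and the right chain lengths. My plan is to cite \cite{Brunovsky1970} (or \cite{Kailath1980}) for it rather than re-prove it. The remaining steps are direct bookkeeping.
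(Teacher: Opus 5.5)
Your proposal is correct and follows the same route as the paper, which simply invokes the Brunovsk\'y normal form (mentioning the module-theoretic characterization as an alternative) and reads the flat output and the degree bounds off the chains of integrators. Your write-up makes that bookkeeping explicit, with one small caveat: as written you tacitly assume $\rank B = m$, so that all $\nu_i \geq 1$. If $B$ has a kernel, some chains have length $\nu_i = 0$ and the corresponding components must be taken as $y_i = v_i$. Then $y$ is a static function of $(x,u)$ rather than of $x$ alone, which still suffices for flatness and for the stated degree bounds.
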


\begin{proof}
This is standard in flatness theory for controllable linear systems. One may either use the Brunovsk\'y normal form \cite{Brunovsky1970} or the module-theoretic characterization of flatness \cite{Fliess1990,FliessLevineMartinRouchon1995}. The last statement follows immediately from the Brunovsk\'y chains of integrators.
\end{proof}

\begin{remark}
Any two flat outputs are related by a unimodular polynomial transformation. Therefore, the spectral information extracted below from the reduced Euler-Lagrange operator is intrinsic, although its matrix expression depends on the chosen flat output.
\end{remark}

Substituting \eqref{flatparam} into \eqref{mainprob} gives a higher-order variational problem in $y$:
\begin{equation}
\label{flat-cost}
J_T(y) = \frac{1}{2} \int_0^T \Big( \big( \cX(D) y \big)^\top Q \big( \cX(D) y \big) + \big( \cU(D) y \big)^\top R \big( \cU(D) y \big) \Big) \, \mathrm{d}t,
\end{equation}
under the endpoint constraint
\begin{equation}
\label{flat-bc}
M_0 \cX(D) y(0) + M_1 \cX(D) y(T) = \gamma.
\end{equation}
The Lagrangian in \eqref{flat-cost} depends only on finitely many derivatives of $y$. By Proposition~\ref{flatprop}, the highest derivative of $y_i$ entering the integrand is at most $\nu_i$.

\subsection{The reduced Euler-Lagrange operator}
For a polynomial matrix differential operator $\mathcal{P}(D) = \sum_{j=0}^\ell P_j D^j$, its formal adjoint is defined by
\begin{equation}
\label{formaladj}
\mathcal{P}(D)^\ast = \sum_{j=0}^\ell (-D)^j P_j^\top = \mathcal{P}(-D)^\top.
\end{equation}

\begin{proposition}
\label{ELprop}
Defining the Euler-Lagrange operator by
\begin{equation}
\label{ELoperator}
\cE(D) = \cX(D)^\ast Q \cX(D) + \cU(D)^\ast R \cU(D),
\end{equation}
The Euler-Lagrange equation associated with \eqref{flat-cost} is
\begin{equation}
\label{ELmatrix}
\cE(D) y = 0.
\end{equation}
The Euler-Lagrange operator $\cE(D)$ is self-adjoint in the sense that $\cE(D)^\ast = \cE(D)$.
Moreover, after repeated integration by parts, the first variation of $J_T$ has the form
\begin{equation}
\label{variation-form}
\delta J_T(y) = \int_0^T \big\langle \cE(D) y(t),\delta y(t) \big\rangle \, \mathrm{d}t + \cT_T \big( Y(0),Y(T),\delta Y(0),\delta Y(T) \big),
\end{equation}
where $Y(t)$ denotes a finite jet of $y$ and $\cT_T$ is a boundary bilinear form. Hence a stationary point is characterized by \eqref{ELmatrix} together with the prescribed endpoint conditions \eqref{flat-bc} and the natural boundary conditions obtained by requiring the boundary term to vanish for all admissible variations.
\end{proposition}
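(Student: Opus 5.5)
The plan is to compute the first variation of \eqref{flat-cost} directly, then move all derivatives off $\delta y$ by repeated integration by parts, and read off both the interior operator and the boundary form. Write $\cX(D) = \sum_{j=0}^{\ell} X_j D^j$ and $\cU(D) = \sum_{j=0}^{\ell} U_j D^j$, padding with zero coefficients so that a common degree $\ell$ suffices. By bilinearity and the symmetry of $Q$ and $R$, for smooth $y$ and $\delta y$ we get
\begin{equation*}
\delta J_T(y) = \int_0^T \Big( \big\langle Q\cX(D)y, \cX(D)\delta y \big\rangle + \big\langle R\cU(D)y, \cU(D)\delta y \big\rangle \Big)\, \mathrm{d}t .
\end{equation*}
Expanding $\cX(D)\delta y = \sum_j X_j \delta y^{(j)}$, each term has the form $\int_0^T \langle X_j^\top w, \delta y^{(j)}\rangle\,\mathrm{d}t$ with $w = Q\cX(D)y$. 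Integrating by parts $j$ times turns this into
\begin{equation*}
\int_0^T \big\langle (-D)^j X_j^\top w, \delta y \big\rangle\, \mathrm{d}t + \Big[ \sum_{i=0}^{j-1} (-1)^i \big\langle D^i X_j^\top w, \delta y^{(j-1-i)} \big\rangle \Big]_0^T .
\end{equation*}
Summing over $j$, the interior integrands add up to $\langle \cX(D)^\ast Q \cX(D) y, \delta y\rangle$, which is exactly the definition \eqref{formaladj} of the adjoint. The $R$-terms are handled the same way. This gives the integral part of \eqref{variation-form} with $\cE(D)$ as in \eqref{ELoperator}.

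The boundary brackets only involve derivatives of $y$ and of $\delta y$ up to a fixed finite order, at most $2\ell$ for $y$ and $\ell-1$ for $\delta y$. They are bilinear in these two jets and evaluated at $t=0$ and $t=T$. Let $Y$ be the jet $(y,\dots,y^{(2\ell-1)})$, which bounds every derivative order appearing in the brackets. The collected brackets then define the bilinear form $\cT_T$, and it depends on $T$ only through the evaluation point.

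Self-adjointness is an algebraic check. The adjoint operation satisfies $(\mathcal P\mathcal S)^\ast = \mathcal S^\ast \mathcal P^\ast$, because
\begin{equation*}
(\mathcal P\mathcal S)(-D)^\top = \mathcal S(-D)^\top \mathcal P(-D)^\top,
\end{equation*}
where $D$ commutes with the constant matrices. It also satisfies $(\mathcal P^\ast)^\ast = \mathcal P$, and a constant symmetric matrix is its own adjoint. Hence
\begin{equation*}
(\cX^\ast Q \cX)^\ast = \cX^\ast Q^\ast \cX^{\ast\ast} = \cX^\ast Q \cX,
\end{equation*}
and likewise for the $R$-term, so $\cE(D)^\ast = \cE(D)$.

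Finally, for the characterization of stationary points, first take variations $\delta y \in C_c^\infty((0,T))^m$. These satisfy the linearized constraint $M_0\cX(D)\delta y(0) + M_1\cX(D)\delta y(T)=0$ trivially and kill $\cT_T$. The fundamental lemma of the calculus of variations then gives $\cE(D)y = 0$ on $(0,T)$. Once \eqref{ELmatrix} holds, stationarity reduces to the vanishing of $\cT_T(Y(0),Y(T),\delta Y(0),\delta Y(T))$ for all admissible jets $\delta Y$, meaning those satisfying the homogeneous version of \eqref{flat-bc}; these are the natural boundary conditions.

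The main delicate point is regularity: a priori $y$ is only such that $\cX(D)y$ and $\cU(D)y$ make sense in $L^2$, yet the integration by parts uses high derivatives of $y$. I would handle this by first deriving $\cE(D)y=0$ in the distributional sense using compactly supported test functions, which needs no regularity beyond what makes $J_T$ finite. Then I would invoke the fact that solutions of a constant-coefficient system, after Smith normal form reduction, are exponential-polynomials and hence smooth. Only after that would I perform the integration by parts that produces $\cT_T$.

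A secondary subtlety is that if $R$ is only semidefinite, the effective order of $\cE$ may be lower than $2\ell$. This does not affect the formal computation; it only means some components of $\cT_T$ degenerate.
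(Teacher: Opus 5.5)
Your proposal is correct and follows the same route as the paper. The paper cites the standard higher-order Euler--Lagrange computation, reads self-adjointness off \eqref{formaladj}, and obtains $\cT_T$ by integrating by parts every derivative falling on $\delta y$; you spell out that bookkeeping, and your extra regularity step (distributional Euler--Lagrange equation, then smoothness via the Smith form) goes beyond the paper and is sound as long as $\det \cE(D) \not\equiv 0$, which the paper also tacitly assumes when it calls the invariant factors $d_j$ monic.
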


\begin{proof}
Formula \eqref{ELmatrix} is the standard higher-order Euler-Lagrange equation for a Lagrangian depending on finitely many derivatives (see, e.g., \cite{Bliss1946, CourantHilbert1953, GelfandFomin1963, Hestenes1966}). Self-adjointness is immediate from \eqref{formaladj}. The boundary term in \eqref{variation-form} is obtained by integrating by parts all derivatives falling on $\delta y$.
\end{proof}

\begin{remark}
The reduced equation \eqref{ELmatrix} is the flat counterpart of the Hamiltonian extremal system. In the regular case $R > 0$, its effective order is $2n$. Indeed, in Brunovsk\'y coordinates, the principal part of $\cU(D)$ is diagonal with entries $D^{\nu_i}$; whence the principal symbol of $\cE(D)$ has a determinant of degree $2 \sum_{i=1}^m \nu_i = 2n$. If $R$ is only semidefinite, this order may drop. The double-integrator example of Section~\ref{double} shows how such an order drop changes the admissible endpoint constraints.
\end{remark}

The Euler-Lagrange operator $\cE(D)$ can then be scalarized by the Smith decomposition, that is, by diagonalizing the polynomial matrix $\cE(D)$ through left and right multiplication by unimodular polynomial matrices (see, e.g., \cite{Kailath1980,GohbergLancasterRodman1982}).

\begin{proposition}
\label{smithprop}
There exist unimodular polynomial matrices $U(D),V(D) \in \R[D]^{m \times m}$ and monic polynomials (invariant factors) $d_1(D),\ldots,d_m(D)$, with $d_1 \,\vert\, d_2 \,\vert\, \cdots \,\vert\, d_m$, such that
\begin{equation}
\label{smith}
U(D) \cE(D) V(D) = \diag \big( d_1(D),\ldots,d_m(D) \big).
\end{equation}
If $z = V(D)^{-1} y$, then \eqref{ELmatrix} is equivalent to the family of scalar equations
\begin{equation}
\label{smithscalar}
d_j(D) z_j = 0, \qquad j = 1,\ldots,m.
\end{equation}
Consequently, every component of every stationary flat output is a finite sum of terms of the form
\begin{equation*}
t^\ell e^{a t} \cos (\omega t), \qquad t^\ell e^{a t} \sin (\omega t),
\end{equation*}
where $a + i \omega$ runs over the roots of the invariant factors $d_j$ and $\ell$ is bounded by the multiplicity minus one.
\end{proposition}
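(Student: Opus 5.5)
The plan is to rely on the fact that $\R[D]$ is a Euclidean domain, hence a principal ideal domain. Existence of the factorization \eqref{smith} is then the classical Smith normal form theorem, and the content lies in the translation to differential equations and in the final description of solutions. First, I would recall the constructive proof of \eqref{smith} (see \cite{Kailath1980,GohbergLancasterRodman1982}). Elementary row and column operations are applied to $\cE(D)$: permutations, multiplication of a row or column by a nonzero real constant, and addition of a polynomial multiple of one row or column to another. Using Euclidean division by the entry of lowest degree, these operations reduce $\cE(D)$ to diagonal form with the divisibility chain $d_1 \,\vert\, \cdots \,\vert\, d_m$. Each elementary operation is a unimodular polynomial matrix, since its determinant is a nonzero constant. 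Their products give $U(D)$ and $V(D)$, and normalizing leading coefficients makes the $d_j$ monic.

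A point I would state explicitly is that the monic normalization and the description in terms of roots require $\det \cE(D) \not\equiv 0$. Otherwise the rank $\rho$ of $\cE(D)$ over $\R(D)$ is less than $m$, and $d_{\rho+1} = \cdots = d_m = 0$. The corresponding components $z_j$ are then completely free, and the exponential-polynomial conclusion fails. This is precisely the degenerate semidefinite situation, as in case (iii) or in more degenerate examples. So I would prove the last assertion under the nondegeneracy hypothesis $\det \cE(D) \not\equiv 0$, which holds for instance when $R>0$ by the preceding Remark on the principal symbol. In the singular case I would record that the zero invariant factors correspond to free components.

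Second, the equivalence of $\cE(D)y=0$ with \eqref{smithscalar}. Polynomial matrix operators with constant coefficients act on $C^\infty([0,T];\R^m)$, and composition of operators corresponds to multiplication of polynomial matrices, because constant coefficients commute with $D$. Since $V(D)$ is unimodular, $V(D)^{-1}$ is again a polynomial matrix. Hence $y \mapsto z = V(D)^{-1}y$ is a bijection of $C^\infty$ with inverse $z \mapsto V(D)z$. Then $\cE(D)y = U(D)^{-1}\diag(d_j)z$, and applying the invertible operator $U(D)$ shows that $\cE(D)y=0$ holds if and only if $d_j(D)z_j=0$ for every $j$. Regularity deserves a brief comment. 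Stationary points are a priori only in the natural Sobolev class, but the constant-coefficient equation \eqref{ELmatrix} with $\det\cE\not\equiv0$ forces smoothness. The cleanest way is to observe that the $z_j$ solve scalar ODEs in the distributional sense, hence are smooth. Alternatively, I would simply restrict to smooth solutions as the paper does.

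Third, the structure of solutions. For a monic real polynomial $d_j$ with factorization $\prod (D-\lambda_k)^{m_k}$, the solution space of $d_j(D)z_j=0$ is spanned by $t^\ell e^{\lambda_k t}$ with $\ell<m_k$, by standard constant-coefficient ODE theory. For real solutions, conjugate pairs of roots give the real and imaginary parts $t^\ell e^{at}\cos(\omega t)$ and $t^\ell e^{at}\sin(\omega t)$. Finally, $y=V(D)z$ is obtained by applying a polynomial differential operator. Differentiating $t^\ell e^{at}\cos(\omega t)$ yields a combination of terms of the same type with the same exponent $a+i\omega$ and powers at most $\ell$. Hence each component of $y$ is a finite sum of the stated form, with $\ell$ bounded by the multiplicity minus one, where multiplicity is measured in $d_j$ and therefore also in $d_m$.

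I expect the main obstacle to be only the careful treatment of the degenerate case $\det\cE\equiv 0$ and of the regularity issue. The algebra itself is routine.
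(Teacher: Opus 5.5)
Your proposal is correct and follows the paper's own route: Smith form over the principal ideal domain $\R[D]$, the unimodular change of unknown $z = V(D)^{-1} y$, and standard constant-coefficient ODE theory, plus the useful extra checks that $V(D)$ preserves exponential-polynomial form and that distributional solutions are smooth. Your caveat that monic invariant factors and the final conclusion require $\det \cE(D) \not\equiv 0$ is legitimate and left implicit by the paper (it holds automatically under \eqref{hyper-cond} or $R>0$), but your illustration is off. In case (iii) of Proposition~\ref{doubleprop3} one has $\cE(D) = q_1 \neq 0$, so the solution space is $\{0\}$ and the statement holds trivially; a genuine instance is a cost ignoring a decoupled channel, e.g.\ $\dot x_i = u_i$ ($i=1,2$) with $Q = R = \diag(1,0)$, which gives $\cE(D) = \diag(1 - D^2, 0)$.
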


\begin{proof}
Since $\R[D]$ is a principal ideal domain, the Smith decomposition exists. Formula \eqref{smithscalar} follows from \eqref{smith}. The description of solutions to \eqref{smithscalar} is the usual one for linear scalar ordinary differential equations with constant coefficients.
\end{proof}

\begin{definition}
We say that the reduced Euler-Lagrange operator $\cE(D)$ defined by \eqref{ELoperator} is \emph{hyperbolic} if
\begin{equation}
\label{hyper-cond}
\det \cE(i \omega) \neq 0
\qquad
\forall \omega \in \R.
\end{equation}
Equivalently, none of the invariant factors $d_j$ has a purely imaginary root.
\end{definition}

\begin{remark}
\label{hyperzero}
The condition \eqref{hyper-cond} must include $\omega = 0$. Indeed, the frequency $\omega = 0$ corresponds to the characteristic root $\lambda = 0$, which is itself purely imaginary. Allowing $\det \cE(0) = 0$ would leave room for zero roots and hence for polynomial modes, exactly as in Proposition~\ref{doubleprop3}(iv)-(vi). In regular linear-quadratic situations, there is no contradiction: for the double integrator, for instance, one has $\cE(0) = q_1 > 0$ whenever $q_1 > 0$.
\end{remark}

The following frequency-domain interpretation is particularly useful. 

\begin{lemma}
For every $\omega \in \R$ and every $\xi \in \C^m$,
\begin{equation}
\label{freqidentity}
\xi^\ast \cE(i \omega) \xi = \big\Vert Q^{\frac{1}{2}} \cX(i \omega) \xi \big\Vert^2 + \big\Vert R^{\frac{1}{2}} \cU(i \omega) \xi \big\Vert^2 \geq 0.
\end{equation}
Hence $\det \cE(i \omega) = 0$ if and only if there exists a nonzero harmonic mode $y(t) = \Rea \big( \xi e^{i \omega t} \big)$ whose induced state and control are invisible to the running cost.
\end{lemma}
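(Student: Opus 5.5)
The identity \eqref{freqidentity} comes from evaluating the formal adjoint on the imaginary axis; the equivalence then follows from positive semidefiniteness. Write $\cX(D) = \sum_j X_j D^j$ with real matrices $X_j$. By \eqref{formaladj}, $\cX(D)^\ast = \sum_j (-D)^j X_j^\top$. Substituting $D = i\omega$ gives
\[
\cX^\ast(i\omega) = \sum_j (-i\omega)^j X_j^\top = \Big( \sum_j \overline{(i\omega)^j} X_j \Big)^\top = \overline{\cX(i\omega)}^\top = \cX(i\omega)^\ast ,
\]
where the last $\ast$ is the conjugate transpose. The real coefficients are what make the symbol of the formal adjoint equal the Hermitian adjoint of the symbol when $\omega$ is real. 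The same holds for $\cU$. Since products of polynomial operators correspond to products of symbols, \eqref{ELoperator} yields
\[
\cE(i\omega) = \cX(i\omega)^\ast Q \cX(i\omega) + \cU(i\omega)^\ast R \cU(i\omega).
\]
Hence $\xi^\ast\cE(i\omega)\xi = (\cX(i\omega)\xi)^\ast Q (\cX(i\omega)\xi) + (\cU(i\omega)\xi)^\ast R(\cU(i\omega)\xi)$. Because $Q$ and $R$ are real symmetric nonnegative, they have real symmetric nonnegative square roots, and $w^\ast Q w = \Vert Q^{1/2} w\Vert^2$ for every $w \in \C^n$. This proves \eqref{freqidentity}, including the inequality.

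For the equivalence, note that $\cE(i\omega)$ is Hermitian positive semidefinite. Thus $\det\cE(i\omega)=0$ if and only if there is $\xi\neq0$ with $\cE(i\omega)\xi=0$, and this holds if and only if $\xi^\ast\cE(i\omega)\xi=0$. The step from the quadratic form vanishing back to the kernel uses semidefiniteness: one writes $\cE = G^\ast G$, so $\Vert G\xi\Vert = 0$ gives $G\xi = 0$. By \eqref{freqidentity}, $\xi^\ast\cE(i\omega)\xi=0$ is equivalent to $Q^{1/2}\cX(i\omega)\xi=0$ and $R^{1/2}\cU(i\omega)\xi=0$.

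It remains to interpret this in the time domain. For the complex exponential $\xi e^{i\omega t}$ we have $\cX(D)(\xi e^{i\omega t}) = \cX(i\omega)\xi\, e^{i\omega t}$. Since $\cX$ has real coefficients, the real mode $y=\Rea(\xi e^{i\omega t})$ produces $x = \Rea(\cX(i\omega)\xi\, e^{i\omega t})$ and $u = \Rea(\cU(i\omega)\xi\,e^{i\omega t})$. Then $Q^{1/2}x(t) = \Rea(Q^{1/2}\cX(i\omega)\xi\,e^{i\omega t})$, which vanishes identically when $Q^{1/2}\cX(i\omega)\xi=0$; similarly $R^{1/2}u \equiv 0$. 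So the running cost $x^\top Qx+u^\top Ru$ vanishes identically, i.e.\ the mode is invisible.

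Conversely, suppose $y=\Rea(\xi e^{i\omega t})$ with $\xi \neq 0$ has $Q^{1/2}x\equiv0$ and $R^{1/2}u\equiv0$. Writing $w = Q^{1/2}\cX(i\omega)\xi$, the function $\Rea(w e^{i\omega t})$ vanishes identically.
\begin{itemize}
\item If $\omega\neq0$, the functions $\cos\omega t$ and $\sin\omega t$ are independent, so both $\Rea w$ and $\Im w$ vanish and $w=0$.
\item If $\omega=0$, the hypothesis only gives $\Rea w=0$. To close the argument, replace $\xi$ by $\Rea\xi$ or by $\Im\xi$, whichever is nonzero; both lie in the kernel of the real matrix $\cE(0)$.
\end{itemize}

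The main obstacle is this last bookkeeping with real parts, in particular the case $\omega=0$ and the requirement that the mode be nonzero. To handle it cleanly, I would read ``nonzero mode'' as $\xi\neq0$ and work with complex modes, noting that for real $\omega$ the kernel of $\cE(i\omega)$ is closed under the conjugation $\xi \mapsto \bar\xi$ paired with $\omega \mapsto -\omega$.
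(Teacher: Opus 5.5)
Your derivation of $\cE(i\omega)=\cX(i\omega)^\ast Q\cX(i\omega)+\cU(i\omega)^\ast R\cU(i\omega)$ from the real coefficients is correct. So is the passage, via positive semidefiniteness, from $\det\cE(i\omega)=0$ to a vector $\xi\neq0$ annihilated by both $Q^{1/2}\cX(i\omega)$ and $R^{1/2}\cU(i\omega)$. This is exactly the paper's argument. The paper stops there and never spells out the translation to real modes, so your treatment of $\omega\neq0$ via the independence of $\cos\omega t$ and $\sin\omega t$ is a genuine addition.

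The one flawed spot is the case $\omega=0$, where you have mixed up the two directions.

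\textbf{Converse at $\omega=0$.} The mode is the constant $y\equiv\Rea\xi$. Since $Q^{1/2}$, $R^{1/2}$, $\cX(0)$ and $\cU(0)$ are real, invisibility says exactly that $\Rea\xi\in\ker\cE(0)$. Nothing is known about $\operatorname{Im}\xi$, so your claim that ``both lie in the kernel'' is circular there. What closes the argument is that a \emph{nonzero} mode at $\omega=0$ means $\Rea\xi\neq0$, so $\Rea\xi$ itself is the required kernel vector.

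\textbf{Reading of ``nonzero mode''.} Your proposal to read ``nonzero mode'' as $\xi\neq0$ does not work for real modes. Take $\xi=i e_1$ and $\omega=0$: then $y\equiv0$ is trivially invisible, while $\cE(0)$ may be invertible (e.g.\ $\cE(0)=q_1>0$ for the double integrator). ``Nonzero'' must mean $y\not\equiv0$, that is, $\xi\neq0$ if $\omega\neq0$ and $\Rea\xi\neq0$ if $\omega=0$. Otherwise you must switch to complex modes with complex invisibility, which changes the statement.

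\textbf{Forward direction at $\omega=0$.} Your $\Rea/\operatorname{Im}$ splitting is the right tool, but it belongs here. A complex kernel vector $\xi$ of the real matrix $\cE(0)$ gives $\Rea\xi,\operatorname{Im}\xi\in\ker\cE(0)$. One of them is nonzero and produces a genuinely nonzero constant invisible mode. Your forward direction currently omits this nonzeroness check.
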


\begin{proof}
Using \eqref{ELoperator} and the reality of the coefficients, one has
$$
\cE(i \omega) = \cX(i \omega)^\ast Q \cX(i \omega) + \cU(i \omega)^\ast R \cU(i \omega).
$$
Taking the Hermitian form against $\xi$ gives \eqref{freqidentity}. Since the right-hand side is a sum of nonnegative terms, it vanishes if and only if both terms vanish.
\end{proof}

\begin{remark}
\label{quartetremark}
Because $\cE(-\lambda)^\top = \cE(\lambda)$, the characteristic roots of $\cE$ are symmetric with respect to both the real axis and the origin: if $\lambda$ is a root, then $-\lambda$, $\bar \lambda$ and $-\bar \lambda$ are roots as well. This is the flat analogue of the classical Hamiltonian symmetry that was used in \cite[Lemmas 1 and 2]{TrelatZuazua2015}.
\end{remark}

\subsection{Boundary conditions and effective order}

Let $N$ denote the total order of the Smith form:
\begin{equation}
\label{Ndef}
N = \sum_{j=1}^m \deg d_j.
\end{equation}
This is the dimension of the solution space of \eqref{ELmatrix}. Under hyperbolicity, $N$ is even and half of the modes are stable while the other half are unstable (see Remark \ref{quartetremark}).

The prescribed initial and final state constraints \eqref{flat-bc} provide only part of the required endpoint information. The remaining conditions come from the natural transversality relations hidden in the boundary term of Proposition~\ref{ELprop}. Altogether, the stationary problem becomes a linear two-point boundary value problem with exactly $N$ scalar conditions. In the regular case $R > 0$, one has $N = 2n$, so fixed initial and final states provide exactly the right number of conditions. If $R$ is semidefinite, then $N$ may be strictly smaller than $2n$, and some of the original endpoint constraints become incompatible. The double integrator with $r = 0$ is the simplest manifestation of this fact.

\begin{remark}
\label{controltracesremark}
The same discussion suggests a less standard but natural extension of the linear-quadratic framework. Suppose that, in addition to endpoint conditions on the state, one wishes to impose linear conditions on $u(0)$, $u(T)$, or more generally on finitely many derivatives of $u$ at $t = 0$ and $t = T$. In the original formulation of \eqref{mainprob}, this is not meaningful on the whole admissible class, since controls are only required to belong to $L^2(0,T;\R^m)$. After flat reduction, however, every stationary trajectory satisfies the polynomial differential equation \eqref{ELmatrix}; hence the corresponding flat output is smooth, and so are $x = \cX(D) y$ and $u = \cU(D) y$. For such extremals, endpoint traces of $u$ and of its derivatives are perfectly well-defined and become ordinary linear boundary conditions on a finite jet of $y$, or equivalently, after passage to a first-order realization of \eqref{ELmatrix}, on the corresponding endpoint state. If these extra conditions are combined with the natural transversality relations and if the resulting two-point boundary operator is admissible, then the same hyperbolic argument yields a unique classical extremal and the same exponential turnpike estimate.

\medskip
This observation is closely related to the standard dynamic-extension trick. For instance, fixing $u(0)$ and $u(T)$ can be rewritten by adjoining $u$ to the state and taking $\dot u = v$ as the new control. The new running cost then penalizes $v$ with a semidefinite control weight, often actually vanishing in the added directions. In that sense, endpoint constraints on the control are another manifestation of the semidefinite regimes highlighted in the present paper.
\end{remark}

\section{Hyperbolic two-point boundary value problems}
\label{hypsec}
We now isolate the elementary hyperbolic estimate that drives the turnpike proof.

\begin{definition}
\label{admissibledef}
Let $\mathbb{A} \in \R^{N \times N}$ have no eigenvalue on the imaginary axis. Denote by $E^s$ and $E^u$ its stable and unstable spectral subspaces. Given matrices $\mathbb{C}_0,\mathbb{C}_1 \in \R^{N \times N}$, we say that the boundary operator
\begin{equation}
\label{admissible-op}
\mathbb{C}_0 Z(0) + \mathbb{C}_1 Z(T) = d
\end{equation}
is \emph{admissible} if the map $\cB_\infty : E^s \times E^u \to \R^N$ defined by
\begin{equation*}
\cB_\infty(a,b) = \mathbb{C}_0 a + \mathbb{C}_1 b
\end{equation*}
is an isomorphism.
\end{definition}

This simply means that the left-boundary condition sees the stable modes and the right-boundary condition sees the unstable modes with full rank.

\begin{theorem}
\label{abstract-bvp}
Let $\mathbb{A} \in \R^{N \times N}$ have no eigenvalue on the imaginary axis, and let $E^s \oplus E^u$ be its stable-unstable splitting. Assume that the boundary operator \eqref{admissible-op} is admissible in the sense of Definition~\ref{admissibledef}. Then there exist $T_0 > 0$ and $C,\mu > 0$ such that, for every $T \geq T_0$ and every $d \in \R^N$, the two-point boundary problem
\begin{equation*}
\dot Z(t) = \mathbb{A} Z(t), \qquad \mathbb{C}_0 Z(0) + \mathbb{C}_1 Z(T) = d,
\end{equation*}
has a unique solution and
\begin{equation}
\label{abstract-est}
\Vert Z(t) \Vert \leq C \Big( e^{-\mu t} + e^{-\mu (T - t)} \Big) \Vert d \Vert \qquad \forall t \in [0,T].
\end{equation}
\end{theorem}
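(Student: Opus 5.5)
The plan is to parametrize every solution of $\dot Z = \mathbb{A} Z$ by its stable component anchored at $t=0$ and its unstable component anchored at $t=T$. Let $P_s, P_u$ be the spectral projections onto $E^s, E^u$, which commute with $\mathbb{A}$. Since the spectrum of $\mathbb{A}|_{E^s}$ lies in $\{\Rea \lambda<0\}$ and that of $\mathbb{A}|_{E^u}$ in $\{\Rea \lambda>0\}$, choose $\mu>0$ strictly smaller than the spectral gap $\min\{|\Rea\lambda|\}$. Standard estimates, for instance via the Jordan form, then give a constant $K\geq 1$ such that
\begin{equation*}
\Vert e^{t\mathbb{A}} a\Vert \leq K e^{-\mu t}\Vert a\Vert \quad (a\in E^s,\ t\geq 0),\qquad \Vert e^{-t\mathbb{A}} b\Vert \leq K e^{-\mu t}\Vert b\Vert \quad (b\in E^u,\ t\geq 0).
\end{equation*}
Every solution can be written uniquely as
\begin{equation*}
Z(t) = e^{t\mathbb{A}} a + e^{-(T-t)\mathbb{A}} b, \qquad a\in E^s,\ b\in E^u,
\end{equation*}
where $a = P_s Z(0)$ and $b = P_u Z(T)$. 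This is a bijection between solutions and pairs $(a,b)\in E^s\times E^u$.

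Next I substitute this form into the boundary condition. It becomes
\begin{equation*}
\cB_T(a,b) := \mathbb{C}_0\big(a + e^{-T\mathbb{A}} b\big) + \mathbb{C}_1\big(e^{T\mathbb{A}} a + b\big) = d,
\end{equation*}
which I rewrite as $\cB_T = \cB_\infty + \mathcal{R}_T$ with $\mathcal{R}_T(a,b) = \mathbb{C}_0 e^{-T\mathbb{A}} b + \mathbb{C}_1 e^{T\mathbb{A}} a$. The key observation is that $e^{T\mathbb{A}}$ acts only on $a\in E^s$ and $e^{-T\mathbb{A}}$ only on $b\in E^u$, so both are decaying. This gives $\Vert \mathcal{R}_T\Vert \leq K(\Vert\mathbb{C}_0\Vert+\Vert\mathbb{C}_1\Vert)e^{-\mu T}$, with the operator norm taken on $E^s\times E^u$ under, say, the norm $\Vert a\Vert+\Vert b\Vert$. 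By admissibility $\cB_\infty$ is invertible, so a Neumann series argument applies. I choose $T_0$ so that $\Vert\cB_\infty^{-1}\Vert\,\Vert\mathcal{R}_T\Vert\leq 1/2$ for $T\geq T_0$. Then $\cB_T$ is invertible with $\Vert\cB_T^{-1}\Vert\leq 2\Vert\cB_\infty^{-1}\Vert$. This yields existence and uniqueness of $(a,b)$, and hence of $Z$, together with the bound $\Vert a\Vert+\Vert b\Vert \leq 2\Vert\cB_\infty^{-1}\Vert\,\Vert d\Vert$.

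Finally, the dichotomy bounds give
\begin{equation*}
\Vert Z(t)\Vert \leq K e^{-\mu t}\Vert a\Vert + K e^{-\mu(T-t)}\Vert b\Vert \leq 2K\Vert\cB_\infty^{-1}\Vert\big(e^{-\mu t}+e^{-\mu(T-t)}\big)\Vert d\Vert,
\end{equation*}
which is \eqref{abstract-est} with $C = 2K\Vert\cB_\infty^{-1}\Vert$, independent of $T$ and $d$.

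The only delicate point is the choice of parametrization. If one instead anchored both components at $t=0$, the unstable part would carry the growing factor $e^{T\mathbb{A}}|_{E^u}$, and the perturbation argument would break down. Anchoring $b$ at $T$ makes every cross term in $\mathcal{R}_T$ exponentially small. Beyond that choice, the remaining care is in fixing a norm on $E^s\times E^u$ and the constants $K,\mu$ with $\mu$ strictly below the spectral gap, which is needed to absorb polynomial factors coming from Jordan blocks.
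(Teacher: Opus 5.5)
Your proposal is correct and follows the paper's proof: the same parametrization $Z(t)=e^{t\mathbb{A}}a+e^{(t-T)\mathbb{A}}b$, with the stable part anchored at $0$ and the unstable part at $T$, and the same perturbation $\cB_T=\cB_\infty+\mathrm{O}(e^{-\mu T})$ of the admissible operator. You also make explicit the Neumann-series bound, the solution--$(a,b)$ bijection that gives uniqueness, and the constant $C=2K\Vert\cB_\infty^{-1}\Vert$; the paper leaves these implicit.
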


\begin{proof}
Since $\mathbb{A}$ is hyperbolic, there exist spectral projectors $\Pi^s$, $\Pi^u$ onto $E^s$, $E^u$, and constants $c,\mu > 0$ such that
\begin{equation}
\label{semigroup-est}
\big\Vert e^{t \mathbb{A}} \Pi^s \big\Vert \leq c e^{-\mu t}, \qquad \big\Vert e^{-t \mathbb{A}} \Pi^u \big\Vert \leq c e^{-\mu t} \qquad
\forall t \geq 0.
\end{equation}
For every solution of $\dot Z = \mathbb{A} Z$, define $a = \Pi^s Z(0) \in E^s$ and $b = \Pi^u Z(T) \in E^u$.
Then
\begin{equation}
\label{decomposition}
Z(t) = e^{t \mathbb{A}} a + e^{(t - T) \mathbb{A}} b.
\end{equation}
Indeed, the first term carries the stable part from the left endpoint and the second term carries the unstable part backward from the right endpoint.

Plugging \eqref{decomposition} into the boundary condition gives $\cB_T(a,b) = d$, where
\begin{equation*}
\cB_T(a,b) = \mathbb{C}_0 \big( a + e^{-T \mathbb{A}} b \big) + \mathbb{C}_1 \big( e^{T \mathbb{A}} a + b \big).
\end{equation*}
By \eqref{semigroup-est},
$$
\big\Vert e^{T \mathbb{A}} a \big\Vert \leq c e^{-\mu T} \Vert a \Vert \qquad \forall a \in E^s,
$$
and similarly
$$
\big\Vert e^{-T \mathbb{A}} b \big\Vert \leq c e^{-\mu T} \Vert b \Vert \qquad \forall b \in E^u.
$$
Hence $\cB_T = \cB_\infty + \mathrm{O} \big( e^{-\mu T} \big)$ as operators on $E^s \times E^u$. Since $\cB_\infty$ is invertible by admissibility, $\cB_T$ is invertible for every $T$ large enough, with uniformly bounded inverse. This yields
\begin{equation}
\label{ab-est}
\Vert a \Vert + \Vert b \Vert \leq C_1 \Vert d \Vert.
\end{equation}
Finally, using \eqref{semigroup-est}, \eqref{decomposition} and \eqref{ab-est}, one obtains
$$
\Vert Z(t) \Vert \leq c e^{-\mu t} \Vert a \Vert + c e^{-\mu (T - t)} \Vert b \Vert \leq C \Big( e^{-\mu t} + e^{-\mu (T - t)} \Big) \Vert d \Vert,
$$
which is \eqref{abstract-est}.
\end{proof}

\begin{remark}
\label{scalarremark}
For a scalar equation $d(D) z = 0$ with no purely imaginary root, admissibility is completely explicit: one writes the general solution as a sum of stable modes $e^{\lambda t}$, $\Rea \lambda < 0$, and unstable modes $e^{\lambda (t - T)}$, $\Rea \lambda > 0$, and one checks that the chosen boundary functionals form an invertible square matrix when evaluated on those modes. Proposition~\ref{doubleprop2} is exactly this criterion in the simplest second-order situation.
\end{remark}

\section{The flatness-based exponential turnpike theorem}\label{mainsec}
We now return to the linear-quadratic problem \eqref{mainprob}. Let $\cE(D)$ be the reduced Euler-Lagrange operator given by \eqref{ELoperator}, and let $N$ be defined by \eqref{Ndef}. By Proposition~\ref{smithprop}, there exists a first-order realization of \eqref{ELmatrix},
\begin{equation}
\label{realization}
\dot Z(t) = \mathbb{A} Z(t),
\qquad
y(t) = \mathbb{L} Z(t),
\end{equation}
with $Z(t) \in \R^N$, such that the spectrum of $\mathbb{A}$ consists exactly of the characteristic roots of $\cE(D)$, counted with algebraic multiplicity. The prescribed endpoint condition \eqref{flat-bc} together with the natural transversality conditions of Proposition~\ref{ELprop} becomes a linear two-point boundary condition
\begin{equation}
\label{realbc}
\mathbb{C}_0 Z(0) + \mathbb{C}_1 Z(T) = \eta,
\end{equation}
for some matrices $\mathbb{C}_0,\mathbb{C}_1 \in \R^{N \times N}$ and some vector $\eta \in \R^N$ determined by the endpoint data and by the affine terms of the original problem, if any.

\begin{theorem}
\label{mainth}
Consider the linear-quadratic problem \eqref{mainprob}. Assume that:
\begin{enumerate}[label={\rm (H\arabic*)},leftmargin=2.4em]
\item \label{H1} the pair $(A,B)$ is controllable;
\item \label{H2} the reduced Euler-Lagrange operator $\cE(D)$ is hyperbolic, namely \eqref{hyper-cond} holds;
\item \label{H3} the boundary operator \eqref{realbc} is admissible in the sense of Definition~\ref{admissibledef}.
\end{enumerate}
Then there exist $T_0 > 0$ and $C,\mu > 0$ such that, for every $T \geq T_0$, the problem \eqref{mainprob} has a unique optimal pair $(x^T,u^T)$ and
\begin{equation}
\label{turnpike-est}
\vert x^T(t) \vert + \vert u^T(t) \vert \leq C \Big( e^{-\mu t} + e^{-\mu (T - t)} \Big)
\qquad
\forall t \in [0,T].
\end{equation}
More generally, the same estimate holds for every derivative of the flat output that enters the expressions of $x$ and $u$.
\end{theorem}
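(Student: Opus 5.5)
The plan is to transfer the abstract hyperbolic estimate of Theorem~\ref{abstract-bvp} to the linear-quadratic problem through the flat parametrization, in four steps: existence and uniqueness of an optimal pair, characterization of the optimum as the solution of the two-point problem \eqref{realization}--\eqref{realbc}, application of Theorem~\ref{abstract-bvp}, and finally reconstruction of $(x^T,u^T)$ from the flat output.

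\textbf{Existence and uniqueness.} By \ref{H1} and Proposition~\ref{flatprop}, trajectory--control pairs of \eqref{probintro} correspond bijectively to flat outputs $y$ via \eqref{flatparam}. So \eqref{mainprob} is equivalent to minimizing \eqref{flat-cost} under \eqref{flat-bc}, which is a convex quadratic problem on an affine space.

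\textbf{Characterization of the optimum.} By Proposition~\ref{ELprop}, a pair is optimal if and only if its flat output is a stationary point. Convexity makes first-order conditions sufficient; necessity is the standard Lagrange multiplier argument, using that $\rank(M_0\ M_1)=k$. Stationary points are exactly the solutions of $\cE(D)y=0$ satisfying \eqref{flat-bc} together with the natural transversality conditions. Through the realization \eqref{realization}, these are exactly the solutions $Z$ of $\dot Z=\mathbb{A}Z$ with \eqref{realbc}.

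\textbf{Application of Theorem~\ref{abstract-bvp}.} Hypothesis \ref{H2} together with the spectral property of the realization shows that $\mathbb{A}$ has no eigenvalue on $i\R$. Hypothesis \ref{H3} is exactly admissibility. Theorem~\ref{abstract-bvp} then gives $T_0,C,\mu$ such that, for $T\geq T_0$, the boundary problem has a unique solution $Z^T$ with $\Vert Z^T(t)\Vert\leq C(e^{-\mu t}+e^{-\mu(T-t)})\Vert\eta\Vert$. This yields both existence of a stationary point and its uniqueness, hence the unique optimal pair $(x^T,u^T)$. To make the constant uniform in $T$, I will need $\eta$ to be bounded independently of $T$. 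This holds because $\eta$ is built from $\gamma$ and the fixed affine data only.

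\textbf{Reconstruction of the state and control.} I will choose the realization so that $Z$ contains the full finite jet of $y$ entering $\cX(D)$ and $\cU(D)$ (up to order $\nu_i$ for $y_i$). One natural choice is the companion form of the Smith-decoupled equations \eqref{smithscalar}, mapped back through the unimodular $V(D)$. With this choice, $x=\cX(D)y$ and $u=\cU(D)y$ are fixed linear functions of $Z(t)$, say $x=\mathbb{K}_x Z$ and $u=\mathbb{K}_u Z$. Then \eqref{turnpike-est} follows with constant $C(\Vert\mathbb{K}_x\Vert+\Vert\mathbb{K}_u\Vert)\Vert\eta\Vert$, and the same bound holds for every derivative of $y$ that enters these expressions.

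\textbf{Main obstacle.} The delicate point is this reconstruction step. When $R$ is only semidefinite, the effective order $N$ may be less than $2n$, so the jet of $y$ required by $\cX,\cU$ is not obviously a linear function of the state of an order-$N$ realization. To handle it, I would argue on the solution space rather than on a particular state vector. The solution space of $\cE(D)y=0$ is $N$-dimensional, and $Z\mapsto(y,\dot y,\dots)$ restricted to solutions is linear. Any fixed-order derivative $y^{(j)}(t)$ is therefore a linear function of $Z(t)$: write $y^{(j)}=\mathbb{L}\mathbb{A}^jZ$. This removes the difficulty.

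A secondary check is the claim that the stationary conditions amount to exactly $N$ independent conditions. This is implicit in \eqref{realbc} being square, and I take it as part of the setup stated before the theorem.
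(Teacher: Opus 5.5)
Your proposal is correct and follows the paper's proof essentially step for step. Both proofs reduce via the flat parametrization to a convex higher-order problem, identify the optimum with the two-point problem \eqref{realization}--\eqref{realbc}, apply Theorem~\ref{abstract-bvp} using \ref{H2}--\ref{H3}, and read off $x^T$ and $u^T$ as fixed linear functions of $Z^T$. Your observations that $y^{(j)}=\mathbb{L}\mathbb{A}^jZ$ and that $\eta$ does not depend on $T$ make explicit what the paper's final step and its $T$-uniform constant use tacitly.
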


\begin{proof}
By Proposition~\ref{flatprop}, the problem reduces to the higher-order variational problem \eqref{flat-cost}-\eqref{flat-bc}. Because the functional is convex, every stationary point is a global minimizer. By Proposition~\ref{ELprop}, stationary points are characterized by the reduced equation \eqref{ELmatrix} together with the two-point boundary condition \eqref{realbc}. By \ref{H2}, the realization matrix $\mathbb{A}$ in \eqref{realization} has no eigenvalue on the imaginary axis. By \ref{H3}, the boundary operator is admissible. Therefore Theorem~\ref{abstract-bvp} applies and yields a unique solution $Z^T$ of \eqref{realization}-\eqref{realbc}, together with the estimate
$\Vert Z^T(t) \Vert \leq C_1 \big( e^{-\mu t} + e^{-\mu (T - t)} \big)$.
Since $x = \cX(D) y$ and $u = \cU(D) y$, and since $y = \mathbb{L} Z$, both $x$ and $u$ are fixed linear combinations of the components of $Z$ and hence satisfy \eqref{turnpike-est}. Uniqueness of the stationary point implies uniqueness of the minimizer.
\end{proof}

\begin{remark}
If the original problem is centered around a nonzero static optimizer $(\bar x,\bar u)$, then Theorem~\ref{mainth} gives
$$
\vert x^T(t) - \bar x \vert + \vert u^T(t) - \bar u \vert \leq C \Big( e^{-\mu t} + e^{-\mu (T - t)} \Big).
$$
Thus the theorem provides the expected exponential turnpike around the static optimum.
\end{remark}

\begin{remark}
Assumption \ref{H3} is the precise formulation of the informal statement that the optimal solution must be \emph{sufficiently constrained at both ends}. The hypothesis is not restrictive in the regular case $R > 0$ with fixed initial and final states: then $N = 2n$, and prescribing the state at $0$ and at $T$ exactly matches the dimension of the stable and unstable bundles. In semidefinite cases, however, the effective order $N$ may be smaller than $2n$, and admissibility becomes a genuine issue. This is the mechanism behind Proposition~\ref{doubleprop2}.
\end{remark}

\begin{remark}
In the regular case $R > 0$ with prescribed initial and final states, Assumption \ref{H3} is not an additional mysterious requirement. Once the stationary solution is written as a sum of left-stable and right-unstable modes, the initial state fixes the stable amplitudes and the final state fixes the unstable amplitudes. Thus admissibility is exactly the nondegeneracy built into the classical shooting argument for the hyperbolic Hamiltonian system (see \cite[end of Section 3.2]{TrelatZuazua2015}).
\end{remark}

\begin{remark}
Hyperbolicity is the natural generic condition. If $\cE(D)$ has a purely imaginary root, then Proposition~\ref{smithprop} produces oscillatory or polynomial modes. Unless the endpoint conditions annihilate those modes by a nongeneric compatibility relation, such components remain visible in the interior and prevent an exponential turnpike estimate. This explains why the loss of detectability in the double-integrator example leads to the linear turnpike of \cite{Trelat2023} rather than to an exponential one.
\end{remark}

\begin{corollary}
Assume that $R > 0$ and that $(A,Q^{\frac{1}{2}})$ is detectable. Then the classical Hamiltonian matrix associated with \eqref{mainprob} has no purely imaginary eigenvalue. Since its spectrum coincides with the characteristic roots of the reduced Euler-Lagrange equation, Assumption \ref{H2} holds. Therefore Theorem~\ref{mainth} recovers the classical exponential turnpike theorem for regular finite-dimensional linear-quadratic problems, under the admissibility of the endpoint conditions.
\end{corollary}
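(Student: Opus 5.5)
The plan has three parts. First, prove the classical fact that the Hamiltonian matrix has no purely imaginary eigenvalue when $R>0$ and $(A,Q^{1/2})$ is detectable. Second, identify its spectrum with the characteristic roots of $\cE(D)$, i.e.\ the roots of $\det\cE(\lambda)$ counted with multiplicity. Third, invoke Theorem~\ref{mainth}.

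\textbf{Step 1: no imaginary Hamiltonian eigenvalues.} Write the Hamiltonian matrix as
$$H=\begin{pmatrix} A & BR^{-1}B^\top\\ Q & -A^\top\end{pmatrix},$$
which comes from the Pontryagin system with $u=R^{-1}B^\top p$. Suppose $H(\xi,\eta)^\top=i\omega(\xi,\eta)^\top$ with $(\xi,\eta)\neq 0$. Then
$$(i\omega-A)\xi=BR^{-1}B^\top\eta,\qquad (i\omega+A^\top)\eta=Q\xi.$$
Take $\eta^\ast$ times the first relation and $\xi^\ast$ times the conjugate-transposed second relation, and add. The cross terms cancel and we get
$$0=\eta^\ast BR^{-1}B^\top\eta+\xi^\ast Q\xi.$$
Since $R>0$, both terms vanish, so $B^\top\eta=0$ and $Q^{1/2}\xi=0$. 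The relations then reduce to $A\xi=i\omega\xi$ with $Q^{1/2}\xi=0$, and $A^\top\eta=-i\omega\eta$ with $B^\top\eta=0$. Detectability (PBH test on the imaginary axis) forces $\xi=0$. Controllability \ref{H1} (PBH for $(A,B)$) forces $\eta=0$. This is a contradiction, so $H$ has no purely imaginary eigenvalue.

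\textbf{Step 2: spectral identification.} This step carries the real content and is the main obstacle. I would use the transfer-function route. Choose a right coprime factorization $(sI-A)^{-1}B=\cX(s)\cU(s)^{-1}$, which is exactly the flat parametrization of Proposition~\ref{flatprop}.

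With this factorization, $\cE(s)=\cU(-s)^\top\Phi(s)\cU(s)$, where
$$\Phi(s)=R+G(-s)^\top QG(s),\qquad G(s)=(sI-A)^{-1}B,$$
is the Popov function. The standard identity
$$\det(sI-H)=(-1)^n\det(sI-A)\det(-sI-A^\top)\det R^{-1}\det\Phi(s)$$
follows from a Schur complement computation on $sI-H$. We also have $\det\cU(s)=c\det(sI-A)$ with $c\neq 0$; this comes from coprimeness together with controllability (Brunovsk\'y form). Combining these gives
$$\det\cE(s)=\mathrm{const}\cdot\det(sI-H),$$
with a nonzero constant. The degree count $2n$ is consistent with the remark on effective order.

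Hence the roots coincide with multiplicities, $N=2n$, and the first-order realization \eqref{realization} has the same spectrum as $H$.

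\textbf{Step 3: conclusion.} Alternatively, Step 1 can be bypassed. By \eqref{freqidentity}, $\det\cE(i\omega)=0$ gives a nonzero $\xi$ with $Q^{1/2}\cX(i\omega)\xi=0$ and $\cU(i\omega)\xi=0$ (using $R>0$). Set $v=\cX(i\omega)\xi$. Then $(i\omega-A)v=B\cU(i\omega)\xi=0$ and $Q^{1/2}v=0$, so detectability gives $v=0$. Right coprimeness of $\cX,\cU$ then forces $\xi=0$. This proves \ref{H2} directly. Either way, with \ref{H1} and \ref{H3} (assumed), Theorem~\ref{mainth} yields the classical turnpike estimate.
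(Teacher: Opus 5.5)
Your proposal is correct and follows the same route as the paper's proof, which is only a sketch. You make the ``classical'' Hamiltonian hyperbolicity explicit by a PBH argument, correctly noting that controllability of $(A,B)$, and not only detectability, is needed to force $\eta=0$. You also turn ``eliminating the state and the control yields the same characteristic equation'' into a precise identity: via $\cE(s)=\cU(-s)^\top\Phi(s)\cU(s)$, the Schur-complement formula and $\det\cU(s)=c\det(sI-A)$, $\det\cE(s)$ is a nonzero constant multiple of $\det(sI-H)$. Your alternative Step 3 derives \ref{H2} directly from \eqref{freqidentity}, detectability at $i\omega$ and the Bezout identity for $(\cX,\cU)$; it is also valid and bypasses the Hamiltonian matrix entirely.
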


\begin{proof}
The Hamiltonian hyperbolicity under $R > 0$ and detectability is classical. Eliminating the state and the control in flat coordinates yields the same characteristic equation. The conclusion follows from Theorem~\ref{mainth}.
\end{proof}

\section{Conclusion and perspectives}

The main point of this article is that flatness yields a very direct explanation of the exponential turnpike phenomenon for controllable linear-quadratic problems. After parametrization by a flat output, the optimality system becomes a constant-coefficient higher-order boundary value problem. The turnpike mechanism is then reduced to two elementary facts: the reduced Euler-Lagrange operator must be hyperbolic, and the endpoint conditions must act nondegenerately on the stable and unstable modes. In the regular case $R > 0$, this recovers in flat coordinates the classical Hamiltonian-Riccati picture. When $R$ is only semidefinite, the same reduction makes visible the possible order drop of the reduced equation, the ensuing loss of admissible boundary conditions, and the appearance of polynomial or oscillatory modes. This is precisely where the flat viewpoint seems to add new information with respect to the standard regular theory. The same framework also provides a natural interpretation of endpoint constraints on the control and on finitely many derivatives of the control: these traces are not defined on the ambient $L^2$ class, but they become ordinary boundary conditions on the smooth flat extremals.

\medskip

Several questions remain open. 
A first one is to understand how far this strategy extends beyond the linear-quadratic setting. For nonlinear flat systems (see \cite{batna} for first hints), one may hope to reduce the optimality system to nonlinear differential equations in flat outputs and to recover turnpike information from a suitable hyperbolic analysis near a steady solution. 
A second question is to clarify the correct functional framework in semidefinite regimes when classical solutions fail to satisfy all endpoint conditions and one has to pass to impulsive, measure-valued, or relaxed controls. One may even hope that nonstandard analysis could provide a simple and appropriate setting for such singular limits, by representing boundary layers together with concentration and oscillation phenomena within a single infinitesimal formalism (see \cite{Nelson1977,Neves2004}).\footnote{See, e.g., \cite{lobry} for an intuitive presentation of nonstandard analysis and some applications in control theory.}
A third direction is to characterize hyperbolicity and admissibility directly in intrinsic flat terms, without passing through a particular Smith reduction or realization. Finally, it would be natural to investigate periodic turnpikes, manifold turnpikes, and descriptor or distributed-parameter analogues from the same viewpoint.


\small

\begin{thebibliography}{99}

\bibitem{AskovicTrelatZidani2024}
V. A\v{s}kovi\'c, E. Tr\'elat, H. Zidani, 
\textit{Linear quadratic optimal control turnpike in finite and infinite dimension: two-term expansion of the value function}, 
Systems Control Lett. {\bf 188} (2024), 105803.

\bibitem{Bliss1946}
G. A. Bliss, 
\textit{Lectures on the Calculus of Variations}, 
University of Chicago Press, Chicago, 1946.

\bibitem{Brunovsky1970}
P. Brunovsk\'y, 
\textit{A classification of linear controllable systems}, 
Kybernetika {\bf 6} (1970), no. 3, 173--188.

\bibitem{CourantHilbert1953}
R. Courant, D. Hilbert, 
\textit{Methods of Mathematical Physics. Vol. I}, 
Interscience Publishers, New York, 1953.

\bibitem{EsteveKouhkouhPighinZuazua2022}
C. Esteve, H. Kouhkouh, D. Pighin, E. Zuazua, 
\textit{The turnpike property and the longtime behavior of the Hamilton-Jacobi-Bellman equation for finite-dimensional LQ control problems}, 
Math. Control Signals Systems {\bf 34} (2022), 819--853.

\bibitem{Fliess1990}
M. Fliess, 
\textit{Some basic structural properties of generalized linear systems}, 
Systems Control Lett. {\bf 15} (1990), no. 5, 391--396.

\bibitem{FliessLevineMartinRouchon1995}
M. Fliess, J. L\'evine, P. Martin, P. Rouchon, 
\textit{Flatness and defect of non-linear systems: introductory theory and examples}, 
Internat. J. Control {\bf 61} (1995), no. 6, 1327--1361.

\bibitem{GelfandFomin1963}
I. M. Gelfand, S. V. Fomin, 
\textit{Calculus of Variations}, 
Revised English edition, translated and edited from the Russian by Richard A. Silverman, Prentice-Hall, Englewood Cliffs, NJ, 1963.

\bibitem{Goh1966}
B. S. Goh, 
\textit{The second variation for the singular Bolza problem}, 
SIAM J. Control {\bf 4} (1966), no. 2, 309--325.

\bibitem{GohbergLancasterRodman1982}
I. Gohberg, P. Lancaster, L. Rodman, 
\textit{Matrix Polynomials}, 
Academic Press, New York, 1982.

\bibitem{GruneGuglielmi2021}
L. Gr\"une, R. Guglielmi, 
\textit{On the relation between turnpike properties and dissipativity for continuous-time linear-quadratic optimal control problems}, 
Math. Control Relat. Fields {\bf 11} (2021), no. 1, 169--188.

\bibitem{GuglielmiLi2024}
R. Guglielmi, Z. Li, 
\textit{Necessary conditions for turnpike property for generalized linear-quadratic problems}, 
Math. Control Signals Systems {\bf 36} (2024), 799--829.

\bibitem{Hestenes1966}
M. R. Hestenes, 
\textit{Calculus of Variations and Optimal Control Theory}, 
John Wiley \& Sons, New York, 1966.

\bibitem{JoinDelaleauFliess2024}
C. Join, E. Delaleau, M. Fliess, 
\textit{Flatness-based control revisited: the HEOL setting}, 
C. R. Math. Acad. Sci. Paris {\bf 362} (2024), 1693--1706.

\bibitem{batna}
C. Join, E. Delaleau, M. Fliess,  
\textit{The Euler-Lagrange equation and optimal control: Preliminary results},
12th Internat. Conf. Syst. Contr., Batna (Algeria) (2024), 155-160, IEEE Xplore.

\bibitem{JoinDelaleauFliess2025}
C. Join, E. Delaleau, M. Fliess, 
\textit{Linear Quadratic Regulators: A New Look}, 
arXiv:2512.10641, 2025.

\bibitem{Kailath1980}
T. Kailath, 
\textit{Linear Systems}, 
Prentice-Hall, Englewood Cliffs, NJ, 1980.

\bibitem{KokotovicKhalilOReilly1986}
P. V. Kokotovi\'c, H. K. Khalil, J. O'Reilly, 
\textit{Singular Perturbation Methods in Control: Analysis and Design}, 
Academic Press, London, 1986.

\bibitem{KwakernaakSivan1972}
H. Kwakernaak, R. Sivan, 
\textit{The maximally achievable accuracy of linear optimal regulators and linear optimal filters}, 
IEEE Trans. Automat. Control {\bf 17} (1972), no. 1, 79--86.

\bibitem{Levine2009}
J. L\'evine, 
\textit{Analysis and Control of Nonlinear Systems: A Flatness-Based Approach}, 
Springer, Berlin, 2009.


\bibitem{lobry}
C. Lobry, T. Sari, 
{\it Non-standard analysis and representation of reality} Int. J. Contr. {\bf 81} (2008), 519-536.

\bibitem{Nelson1977}
E. Nelson,
\textit{Internal set theory: A new approach to nonstandard analysis},
Bull. Amer. Math. Soc. {\bf 83} (1977), no. 6, 1165--1198.

\bibitem{Neves2004}
V. Neves,
\textit{Nonstandard calculus of variations},
J. Math. Sci. {\bf 120} (2004), no. 1, 940--954.

\bibitem{OReilly1983}
J. O'Reilly, 
\textit{Partial cheap control of the time-invariant regulator}, 
Internat. J. Control {\bf 37} (1983), no. 5, 909--927.

\bibitem{SaberiSannuti1987}
A. Saberi, P. Sannuti, 
\textit{Cheap and singular controls for linear quadratic regulators}, 
IEEE Trans. Automat. Control {\bf 32} (1987), no. 3, 208--219.

\bibitem{Schwartz1965}
L. Schwartz, 
\textit{M\'ethodes math\'ematiques pour les sciences physiques}, 
Hermann, Paris, 1965.

\bibitem{Trelat2023}
E. Tr\'elat, 
\textit{Linear turnpike theorem}, 
Math. Control Signals Systems {\bf 35} (2023), no. 3, 685--739.

\bibitem{TrelatZhangZuazua2018}
E. Tr\'elat, C. Zhang, E. Zuazua, 
\textit{Steady-state and periodic exponential turnpike property for optimal control problems in Hilbert spaces}, 
SIAM J. Control Optim. {\bf 56} (2018), no. 2, 1222--1252.

\bibitem{TrelatZuazua2015}
E. Tr\'elat, E. Zuazua, 
\textit{The turnpike property in finite-dimensional nonlinear optimal control}, 
J. Differential Eq. {\bf 258} (2015), no. 1, 81--114.

\bibitem{TrelatZuazua2025}
E. Tr\'elat, E. Zuazua, 
\textit{Turnpike in optimal control and beyond: a survey}, 
arXiv:2503.20342, 2025.

\bibitem{Vinter2000}
R. Vinter, 
\textit{Optimal Control}, 
Birkh\"auser, Boston, 2000.

\bibitem{VinterLewis1978}
R. B. Vinter, R. M. Lewis, 
\textit{The equivalence of strong and weak formulations for certain problems in optimal control}, 
SIAM J. Control Optim. {\bf 16} (1978), 546--570.

\bibitem{Yosida1984}
K. Yosida, 
\textit{Operational Calculus: A Theory of Hyperfunctions}, 
Springer, New York, 1984.

\end{thebibliography}
\end{document}